\newtheorem{theorem}{Theorem}[section]
\newtheorem{lemma}[theorem]{Lemma} 
\newtheorem{proposition}[theorem]{Proposition} 
\newtheorem{definition}[theorem]{Definition} 
\newtheorem{problem}[theorem]{Problem} 
\newtheorem{corollary}[theorem]{Corollary} 
\numberwithin{equation}{section}
\def\Q{{\mathbb {Q}}}
\def\Z{{\mathbb Z}} \def\F{{\mathbb F}}
\def\R{{\mathbb R}} 
\def\C{{\mathbb C}}  
   \def\resp{{\rm resp.,  }}
\def\house#1{\setbox1=\hbox{$\,#1\,$}%
\dimen1=\ht1 \advance\dimen1 by 2pt \dimen2=\dp1 \advance\dimen2 by 2pt
\setbox1=\hbox{\vrule height\dimen1 depth\dimen2\box1\vrule}%
\setbox1=\vbox{\hrule\box1}%
\advance\dimen1 by .4pt \ht1=\dimen1
\advance\dimen2 by .4pt \dp1=\dimen2 \box1\relax}
  \def\deg{{\rm deg}}
  \def\eps{{\varepsilon}}
\def\build#1_#2^#3{\mathrel{\mathop{\kern 0pt#1}\limits_{#2}^{#3}}}
\def\date {le\ {\the\day}\ \ifcase\month\or
janvier\or fevrier\or mars\or avril\or mai\or juin\or juillet\or
ao\^ut\or septembre\or octobre\or novembre\or
d\'ecembre\fi\ {\oldstyle\the\year}}
\font\fivegoth=eufm5 \font\sevengoth=eufm7 \font\tengoth=eufm10
\def\hw{{\widehat w}} 
\def\hla{{\widehat \lambda}}
\def\F{{\mathbb F}}
\begin{document}

\vskip 2mm

\title{Mahler's and Koksma's classifications in fields of power series}
 
\vskip 8mm

\author{Jason Bell}
\address{Department of Pure Mathematics, University of Waterloo, Waterloo, ON, Canada N2L3G1}
\email{jpbell@uwaterloo.ca}

\author{Yann Bugeaud}
\address{Universit\'e de Strasbourg, Math\'ematiques,
7, rue Ren\'e Descartes, 67084 STRASBOURG  (France)}
\email{bugeaud@math.unistra.fr}

\begin{abstract}
Let $q$ a prime power and $\F_q$ the finite field of $q$ elements. 
We study the analogues of Mahler's and Koksma's classifications of complex numbers for 
power series in $\F_q((T^{-1}))$. Among other results, we establish that both classifications coincide, thereby 
answering a question of Ooto. 
\end{abstract}

\subjclass[2010]{11J61, 11J04, 11J81}
\keywords{Diophantine approximation, power series field, Mahler's classification}

\maketitle

\section{Introduction}

Mahler \cite{Mah32}, in 1932, and Koksma \cite{Ko39}, in 1939, 
introduced two related measures for the 
quality of approximation of a complex number $\xi$ by algebraic numbers.
For any integer $n \ge 1$, we denote by
$w_n (\xi)$ the supremum of the real numbers $w$ for which
$$
0 < |P(\xi)| < H(P)^{-w}
$$
has infinitely many solutions in integer polynomials $P(X)$ of
degree at most $n$. Here, $H(P)$ stands for the na\"\i ve height of the
polynomial $P(X)$, that is, the maximum of the absolute values of
its coefficients. Further, we set 
$$
w(\xi) = \limsup_{n \to \infty}  \frac{w_n(\xi)}{n}
$$
and, according to Mahler \cite{Mah32}, we say that $\xi$ is
$$
\displaylines{
\hbox{an $A$-number, if $w(\xi) = 0$;} \cr
\hbox{an $S$-number, if $0 < w(\xi) < \infty$;} \cr
\hbox{a $T$-number, if $w(\xi) = \infty $ and $w_n(\xi) < \infty$, for any
integer $n\ge 1$;} \cr
\hbox{a $U$-number, if $w(\xi) = \infty $ and $w_n(\xi) = \infty$, for some
integer $n\ge 1$.}\cr}
$$
The set of complex $A$-numbers is the set of complex algebraic numbers. 
In the sense of the Lebesgue measure, almost all numbers are $S$-numbers. 
Liouville numbers (which, by definition, are the real numbers $\xi$ such that 
$w_1(\xi)$ is infinite) are examples of $U$-numbers, while the existence of
$T$-numbers remained an open problem during nearly forty years, until it was
confirmed by Schmidt \cite{Schm70,Schm71}.

Following Koksma \cite{Ko39}, for any integer $n \ge 1$, we denote by
$w_n^* (\xi)$ the supremum of the real numbers $w^*$ for which
$$
0 < |\xi - \alpha| < H(\alpha)^{-w^*-1}
$$
has infinitely many solutions in complex algebraic numbers $\alpha$ of
degree at most $n$. Here, $H(\alpha)$ stands for the na\"\i ve height of $\alpha$,
that is, the na\"\i ve height of its minimal defining polynomial over the integers. 
Koksma \cite{Ko39} defined $A^*$-, $S^*$-, $T^*$- and $U^*$-numbers as above, using $w_n^*$
in place of $w_n$. Namely, setting
$$
w^*(\xi) = \limsup_{n \to \infty} \frac{w_n^* (\xi)}{n},
$$
we say that $\xi$ is 
$$
\displaylines{
\hbox{an $A^*$-number, if $w^*(\xi) = 0$;} \cr
\hbox{an $S^*$-number, if $0 < w^*(\xi) < \infty$;} \cr
\hbox{a $T^*$-number, if $w^*(\xi) = \infty $ and $w_n^*(\xi) < \infty$, for any
integer $n\ge 1$;} \cr
\hbox{a $U^*$-number, if $w^*(\xi) = \infty $ and $w_n^* (\xi) = \infty$, for some
integer $n\ge 1$.}\cr}
$$
Koksma proved that this classification of numbers
is equivalent to the Mahler one, in the sense that the classes $A$, $S$, $T$, $U$ coincide 
with the classes $A^*$, $S^*$, $T^*$, $U^*$, respectively. 
For more information on the functions $w_n$ and $w_n^*$, 
the reader is directed to \cite{BuLiv,BuDurham}. 


Likewise, we can divide the sets of real numbers and $p$-adic numbers in classes $A$, $S$, $T$, $U$ and 
$A^*$, $S^*$, $T^*$, $U^*$. 
However, there is a subtle difference with the case of complex numbers,
since the field $\R$ of real numbers and the field $\Q_p$ of $p$-adic numbers are not algebraically closed. 
This means that, in the definition of the exponent $w_n^* (\xi)$ for a real (\resp $p$-adic)
number $\xi$, we have to decide whether the algebraic approximants $\alpha$ 
are to be taken in $\C$ (\resp in an algebraic closure of $\Q_p$) or in $\R$ 
(\resp in $\Q_p$). Fortunately, in both cases, it makes no difference, 
as shown in \cite{Bu03,BuLiv}. 
For instance, it has been proved that, if there is $\alpha$ of degree $n$ in 
an algebraic closure of $\Q_p$ satisfying $| \xi - \alpha| < H(\alpha)^{-1-w^*}$, then there exists $\alpha'$ in 
$\Q_p$, algebraic of degree at most $n$, such that $H(\alpha') \le c H(\alpha)$ and 
$| \xi - \alpha'| \le c H(\alpha')^{-1-w^*}$, where $c$ depends only on $\xi$ and on $n$. 

The analogous question has not yet been clarified for Diophantine approximation in the
field $\F_q ((T^{-1}))$ of power series over the finite field $\F_q$. Different authors have different 
practices, some of them define $w_n^*$ by restricting to algebraic elements in $\F_q ((T^{-1}))$, while some
others allow algebraic elements to lie in an algebraic closure of $\F_q ((T^{-1}))$. 
One of the aims of the present paper is precisely to clarify this point. 

Our framework is the following. Let $p$ be a prime number and $q = p^f$ an integer 
power of $p$. Any non-zero element $\xi$ in $\F_q ((T^{-1}))$ can be written
$$
\xi = \sum_{n=N}^{+ \infty} \, a_n T^{-n},
$$
where $N$ is in $\Z$, $a_N \not= 0$, and $a_n$ is in $\F_q$ for $n \ge N$. We define 
a valuation $\nu$ and an absolute value $| \cdot |$ 
on $\F_q ((T^{-1}))$ by setting $\nu (\xi) = N$, $| \xi | := q^{-N}$, and $\nu (0) = + \infty$, $|0|:= 0$. 
In particular, if $R(T)$ is a non-zero polynomial in $\F_q[T]$, then we have $|R| = q^{\deg(R)}$. 
The field $\F_q ((T^{-1}))$ is the completion with respect to $\nu$ of the 
quotient field $\F_q(T)$ of the polynomial ring $\F_q [T]$. 
It is not algebraically closed. 
Following \cite{Tha12}, we denote by $C_{\infty}$ the completion of its algebraic closure. 
To describe precisely the set of algebraic elements in $C_{\infty}$ is rather complicated. 
Indeed, Abhyankar \cite{Ab56} pointed out that it contains the element 
$$
T^{-1/p} + T^{-1/p^2} + T^{-1/p^3} + \cdots ,
$$
which is a root of the polynomial $T X^p - T X - 1$. 
Kedlaya \cite{Ked01,Ked17} constructed an algebraic closure of $K((T^{-1}))$ for any field $K$ 
of positive characteristic in terms of certain generalized power series.

There should be no confusion between the variable $T$ and the notion of $T$-number.

The height $H(P)$ of a polynomial $P(X)$ over $\F_q[T]$ is the maximum of the absolute values of its 
coefficients. A power series in $C_\infty$ is called algebraic if it is a root of a nonzero 
polynomial with coefficients in $\F_q[T]$. 
Its height is then the height of its minimal 
defining polynomial over $\F_q[T]$. 
We define the exponents of approximation $w_n$ and $w_n^*$ as follows.

\begin{definition}
\label{Def:1.1}
Let $\xi$ be in $\F_q ((T^{-1}))$. Let $n \ge 1$ be an integer.
We denote by
$w_n (\xi)$ the supremum of the real numbers $w$ for which
$$
0 < |P(\xi )| < H(P)^{-w}
$$
has infinitely many solutions in polynomials $P(X)$ over $\F_q[T]$ of
degree at most $n$. 
We denote by
$w_n^* (\xi)$ the supremum of the real numbers $w^*$ for which
$$
0 < |\xi - \alpha| < H(\alpha)^{-w^*-1}
$$
has infinitely many solutions in algebraic power series $\alpha$ in $\F_q ((T^{-1}))$ of
degree at most $n$.
\end{definition}

An important point in the definition of $w_n^*$ is that we require that the approximants $\alpha$ lie 
in $\F_q ((T^{-1}))$. In the existing literature, it is not always clearly specified 
whether the algebraic approximants are 
taken in $C_{\infty}$ or in $\F_q ((T^{-1}))$. To take this into account, we 
introduce the following exponents of approximation, where we use the superscript ${}^@$ 
to refer to the field $C_{\infty}$.

\begin{definition}
\label{Def:1.3}
Let $\xi$ be in $\F_q ((T^{-1}))$. Let $n \ge 1$ be an integer.
We denote by $w_n^@ (\xi)$ the supremum of the real numbers $w^@$ for which
$$
0 < |\xi - \alpha| < H(\alpha)^{-w^@-1}
$$
has infinitely many solutions in algebraic power series $\alpha$ in $C_{\infty}$ of
degree at most~$n$. 
\end{definition}

Clearly, we have $w_n^@ (\xi) \ge w_n^* (\xi)$ for every $n \ge 1$ and every $\xi$ in $\F_q ((T^{-1}))$. 
The first aim of this paper is to establish that the functions $w_n^*$ and $w_n^@$ coincide. 

\begin{theorem}
\label{Th:2.0}
For any $\xi$ in $\F_q ((T^{-1}))$ and any integer $n \ge 1$, we have
$$
w_n^* (\xi) = w_n^@ (\xi).
$$
\end{theorem}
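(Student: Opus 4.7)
The plan is to prove $w_n^*(\xi) \ge w_n^@(\xi)$, since the reverse inequality is immediate from the inclusion $\F_q((T^{-1})) \subset C_\infty$. The strategy is to replace any sufficiently good $C_\infty$-approximant $\alpha$ of $\xi$ by an element of $\F_q((T^{-1}))$ of comparable height and approximation quality, splitting into cases according to whether $\alpha$ is separable or inseparable over $\F_q((T^{-1}))$.

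Let $\alpha \in C_\infty$ be algebraic over $\F_q(T)$ of degree at most $n$ with $|\xi - \alpha| < H(\alpha)^{-w-1}$. Since $\F_q((T^{-1}))$ is complete, the absolute value on $C_\infty$ is the unique extension of the valuation $\nu$, hence is preserved by every $\F_q((T^{-1}))$-automorphism of a normal closure. When $\alpha$ is separable over $\F_q((T^{-1}))$, every $\F_q((T^{-1}))$-conjugate $\alpha'$ of $\alpha$ therefore satisfies $|\xi - \alpha'| = |\xi - \alpha|$. I would then invoke Krasner's lemma: if $|\xi - \alpha| < \min_{\alpha' \ne \alpha}|\alpha - \alpha'|$, where the minimum is over the conjugates of $\alpha$ distinct from $\alpha$, then $\F_q((T^{-1}))(\alpha) \subseteq \F_q((T^{-1}))(\xi) = \F_q((T^{-1}))$, which forces $\alpha \in \F_q((T^{-1}))$. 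The conjugate separation admits a polynomial lower bound in $H(\alpha)$: the minimal polynomial $P$ of $\alpha$ over $\F_q[T]$ is separable whenever $\alpha$ is separable over $\F_q((T^{-1}))$, so $\operatorname{disc}(P)$ is a nonzero element of $\F_q[T]$ with $|\operatorname{disc}(P)| \ge 1$; combined with the standard upper bound $|\alpha_i - \alpha_j| \le c(n) H(P)^{c'(n)}$ on the pairwise root differences, this yields $\min_{i \ne j}|\alpha_i - \alpha_j| \ge c_1 H(\alpha)^{-C(n)}$ for a constant $C(n)$ depending only on $n$. Hence, once $w > C(n) - 1$, the Krasner hypothesis is automatically satisfied and every separable $C_\infty$-approximant lies in $\F_q((T^{-1}))$; letting $w \to w_n^@(\xi)$ gives $w_n^*(\xi) \ge w_n^@(\xi)$ in this regime, while the small-$w$ range is absorbed by the trivial Dirichlet-type baseline for $w_n^*(\xi)$.

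When $\alpha$ is inseparable over $\F_q((T^{-1}))$, its minimal polynomial has the form $Q(X^{p^e})$ with $Q$ separable and $e \ge 1$. Since the Frobenius map is an isometry for the unique extended absolute value, $|\xi^{p^e} - \alpha^{p^e}| = |\xi - \alpha|^{p^e}$, while the minimal polynomial of $\alpha^{p^e}$ over $\F_q[T]$ is $Q$ itself, which shares the same set of coefficients as $P = Q(X^{p^e})$; in particular $H(\alpha^{p^e}) = H(\alpha)$, and $\alpha^{p^e}$ is separable over $\F_q((T^{-1}))$ of degree at most $n/p^e$. Applying the separable case to the pair $(\xi^{p^e}, \alpha^{p^e})$, the argument above forces $\tilde\alpha := \alpha^{p^e} \in \F_q((T^{-1}))$ whenever $|\xi - \alpha|$ is small enough in terms of $H(\alpha)$.

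The main obstacle is the final descent in the inseparable case: given $\tilde\alpha \in \F_q((T^{-1}))$ approximating $\xi^{p^e}$, one must still extract an approximant to $\xi$ itself in $\F_q((T^{-1}))$ of degree at most $n$. The naive candidate $\tilde\alpha^{1/p^e}$ coincides with $\alpha$ and lies outside $\F_q((T^{-1}))$ precisely because $\F_q((T^{-1}))$ is imperfect, and the polynomial $\tilde P(Y^{p^e}) \in \F_q[T][Y]$ obtained from the minimal polynomial $\tilde P$ of $\tilde\alpha$ has no root in $\F_q((T^{-1}))$ close to $\xi$ in general. I would resolve this by exploiting the Kedlaya-type generalized-power-series description of algebraic elements of $C_\infty$ mentioned in the introduction: the ``integer-exponent part'' of the Puiseux-like expansion of $\alpha$ lies in $\F_q((T^{-1}))$, realizes the distance from $\alpha$ to $\F_q((T^{-1}))$, and can be shown to be algebraic of degree at most that of $\alpha$. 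The ultrametric inequality then produces an $\F_q((T^{-1}))$-algebraic approximant $\alpha'$ to $\xi$ with $|\xi - \alpha'| \le |\xi - \alpha|$ and $H(\alpha') \le c\, H(\alpha)$, completing the argument.
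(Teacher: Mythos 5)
Your proposal has two genuine gaps, and the paper's actual argument is organized quite differently precisely to avoid them.

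\textbf{Gap in the separable case.} Your discriminant bound gives $\min_{i\ne j}|\alpha_i-\alpha_j|\ge c\,H(\alpha)^{-C(n)}$ with $C(n)$ on the order of $2(n-1)$, so Krasner's lemma is only guaranteed to apply once $w>C(n)-1$. You then assert that the complementary range ``is absorbed by the trivial Dirichlet-type baseline for $w_n^*(\xi)$.'' This is where the argument breaks: there is no known a priori lower bound on $w_n^*(\xi)$ of size $\approx 2n$. The paper explicitly notes that Guntermann proved only $w_n^@(\xi)\ge (n+1)/2$ and gave \emph{no} lower bound for $w_n^*$ except when $n=2$; and $(n+1)/2$ is far below $2(n-1)$ for $n\ge 3$. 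So the ``small-$w$'' regime is not covered and the separable case is incomplete as written.

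\textbf{Gap in the inseparable case.} You write $P=Q(X^{p^e})$, pass to $\xi^{p^e}$ and $\alpha^{p^e}$, and then acknowledge the descent back to an approximant of $\xi$ in $\F_q((T^{-1}))$ is the obstacle. The proposed fix — that the ``integer-exponent part'' of the generalized power-series expansion of $\alpha$ lies in $\F_q((T^{-1}))$, realizes the distance, is algebraic of degree at most $\deg\alpha$, and has height $\ll H(\alpha)$ — is a cluster of nontrivial claims offered without proof. In particular, there is no reason given that this truncation is algebraic of controlled degree and height; this is exactly the kind of assertion that would need a careful argument in the Kedlaya framework, and the paper does not attempt such a route.

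\textbf{How the paper gets around both.} The paper does not split on separability of $P_\alpha$ and does not need a polynomial lower bound for conjugate separation. It orders the roots by distance to $\xi$ and lets $r$ be the number of roots equidistant to $\xi$ at minimal distance. If $r=1$, Krasner applies directly (the hypothesis $|\xi-\alpha_1|<|\xi-\alpha_j|$ is exactly what $r=1$ says, with no need to bound $|\alpha_i-\alpha_j|$). If $r\ge 2$, the factorization $|P_\alpha(\xi)|=|c_\alpha|\prod_i|\xi-\alpha_i|$ together with Lemma \ref{lem:estimate} gives $|P_\alpha(\xi)|\ll H(\alpha)^{1-r(1+w^@-\varepsilon)}$, and combining this with the defining lower bound $|P_\alpha(\xi)|>H(\alpha)^{-w_n(\xi)-\varepsilon}$ and the Wirsing-type inequality \eqref{GuW}, $w_n^@(\xi)\ge (w_n(\xi)+1)/2$, produces a contradiction. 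The ingredient you are missing is precisely this Wirsing-type bound relating $w_n^@$ to $w_n$; it is what closes the gap that your baseline appeal cannot, and it simultaneously handles the inseparable case (inseparability just forces $r\ge 2$, which is covered uniformly).
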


Theorem \ref{Th:2.0} is not surprising, since it seems 
to be very unlikely that a power series in $\F_q ((T^{-1}))$ 
could be better approximated by 
algebraic power series in $C_{\infty} \setminus \F_q ((T^{-1}))$ than by 
algebraic power series in $\F_q ((T^{-1}))$. Difficulties arise because of the existence 
of polynomials over $\F_q[T]$ which are 
not separable and of the lack of a Rolle Lemma, which is a key ingredient 
for the proof of the analogous result for
the classifications of real and $p$-adic numbers. 

Exactly as Mahler and Koksma did, we divide the set of power series in $\F_q ((T^{-1}))$ in 
classes $A$, $S$, $T$, $U$, $A^*$, $S^*$, $T^*$, and $U^*$, by using the exponents 
of approximation $w_n$ and $w_n^*$. 
It is convenient to keep 
the same terminology and to use $S$-numbers, etc., although we are concerned with power series
and not with `numbers'. This has been done by Bundschuh \cite{Bund78}, who gave some explicit 
examples of $U$-numbers. 
Ooto \cite[p. 145]{Oo17} observed that, by the
currently known results (with $w_n^@$ used instead of $w_n^*$ in the definitions 
of the classes), the sets of $A$-numbers and of $A^*$-numbers
coincide, as do the sets of $U$-numbers and of $U^*$-numbers. Furthermore, an $S$-number is an $S^*$-number, 
while a $T^*$-number is a $T$-number. However, it is not known whether the sets 
of $S$-numbers (\resp $T$-numbers) and of $S^*$-numbers (\resp $T^*$-numbers) coincide. 
The second aim of this paper is to establish that these sets coincide, thereby 
answering \cite[Problem 5.9]{Oo17}.

\begin{theorem}
\label{Th:Tnbs}
In the field $\F_q ((T^{-1}))$ the classes $A$, $S$, $T$, $U$ coincide 
with the classes $A^*$, $S^*$, $T^*$, $U^*$, respectively. 
\end{theorem}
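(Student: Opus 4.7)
The plan hinges on Theorem~\ref{Th:2.0}: since $w_n^*(\xi)=w_n^@(\xi)$ for every $n\ge 1$ and every $\xi$, the classes $A^*, S^*, T^*, U^*$ agree with the corresponding classes defined using $w_n^@$. Ooto's observations, recalled before Theorem~\ref{Th:Tnbs}, then give the inclusions $A=A^*$, $U=U^*$, $S\subseteq S^*$, and $T^*\subseteq T$ once the two sets of classes are identified via Theorem~\ref{Th:2.0}. Since the four classes partition $\F_q((T^{-1}))$, the remaining task is a single reverse inclusion, say $T\subseteq T^*$; this unwinds to the implication $w(\xi)=\infty\Rightarrow w^*(\xi)=\infty$.

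To establish that implication, I would prove a quantitative comparison of the form
$$
w_n(\xi)\;\le\;w_n^@(\xi)+n-1\qquad\text{for all }n\ge 1\text{ and all }\xi\in\F_q((T^{-1})).
$$
Dividing by $n$ and taking $\limsup$ gives $w(\xi)\le w^@(\xi)+1$, so $w(\xi)=\infty$ forces $w^@(\xi)=\infty$; by Theorem~\ref{Th:2.0} this yields $w^*(\xi)=\infty$, as required. For the displayed comparison, fix $P(X)\in\F_q[T][X]$ of degree $n$ with leading coefficient $a$ and $|P(\xi)|<H(P)^{-w}$ (with $w$ arbitrarily close to $w_n(\xi)$), and factor $P(X)=a\prod_{i=1}^n(X-\alpha_i)$ in $C_\infty$; let $\alpha=\alpha_j$ minimize $|\xi-\alpha_j|$ and let $k$ denote its multiplicity as a root of $P$. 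Once $H(P)$ is large enough that $|\xi-\alpha|<|\alpha-\alpha_i|$ for every $\alpha_i\ne\alpha$, the ultrametric equality $|\xi-\alpha_i|=|\alpha-\alpha_i|$ gives
$$
|P(\xi)|\;=\;|a|\,|\xi-\alpha|^k\prod_{\alpha_i\ne\alpha}|\alpha-\alpha_i|.
$$
A Liouville-style lower bound for the right-hand product in terms of a fixed negative power of $H(P)$, obtained from the multiplicativity of the Gauss norm on $C_\infty[X]$ together with the relation $H(\alpha)\le H(P)$ (since the primitive minimal polynomial of $\alpha$ divides $P$ in $\F_q[T][X]$), converts $|P(\xi)|<H(P)^{-w}$ into $|\xi-\alpha|\le H(\alpha)^{-(w-n+1)-1}$ along the infinite sequence of $P$'s, whence $w_n^@(\xi)\ge w_n(\xi)-n+1$.

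The main obstacle is the inseparable case, in which $\alpha$ appears with multiplicity $k\ge 2$ and $P'(\alpha)=0$, so that the classical derivative-based estimate collapses. Working inside $C_\infty$ is what rescues the argument: the factorization of $P$ there retains all multiplicities, the Gauss-norm estimates apply uniformly in separability, and Theorem~\ref{Th:2.0} then converts the resulting $w_n^@$-bound into the $w_n^*$-bound needed to complete $T=T^*$ and, by the partition argument, $S=S^*$. Without Theorem~\ref{Th:2.0}, producing the algebraic approximant $\alpha$ directly inside $\F_q((T^{-1}))$ when $P$ is inseparable would itself be the central difficulty; it is precisely this difficulty that is dispatched upstream.
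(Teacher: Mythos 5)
Your high-level structure is sound: invoking Theorem~\ref{Th:2.0} to trade $w_n^*$ for $w_n^@$, then using the partition of $\F_q((T^{-1}))$ together with Ooto's four observations to reduce everything to the single inclusion $T\subseteq T^*$, which in turn reduces to the inequality $w_n^@(\xi)\ge w_n(\xi)-n+1$. This matches the paper's route, which derives Theorem~\ref{Th:Tnbs} from Theorem~\ref{Th:wineq}, the key content of which is exactly this inequality combined with Theorem~\ref{Th:2.0}.

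The gap is in your proof of the inequality. Once you fix $P$ with $0<|P(\xi)|<H(P)^{-w}$ and let $\alpha$ be the closest root, with multiplicity $k$ in $P$, your factorization
$$
|P(\xi)|=|a|\,|\xi-\alpha|^k\prod_{\alpha_i\ne\alpha}|\alpha-\alpha_i|
$$
together with a lower bound of the form $|a|\prod_{\alpha_i\ne\alpha}|\alpha-\alpha_i|\gg H(P)^{-c}$ yields only $|\xi-\alpha|\ll H(P)^{(-w+c)/k}$. For $k\ge 2$ this is weaker by a factor of $k$ in the exponent, and no manipulation of Gauss norms or heights recovers the loss; the multiplicity $k$ really does appear in the exponent. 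You flag the inseparable case as "the main obstacle" but then assert that "working inside $C_\infty$ is what rescues the argument." It does not: $C_\infty$ merely makes the factorization available, it does not remove the $k$-th root. The issue is not where the roots live but that the derivative-based lower bound collapses when $P'(\alpha)=0$, exactly as you noted. What actually rescues the argument in the paper is Lemma~\ref{CartOp} (the Cartier-operator reduction), which replaces the given $P$ by a \emph{separable} polynomial $Q$ of degree at most $n$, of comparable height, still satisfying $0<|Q(\xi)|\le H(Q)^{-w}$. Only after this reduction can one run the "closest root" argument (Ooto's Lemma~5.4) and obtain $|\xi-\alpha|\le|Q(\xi)|\,H(Q)^{n-2}$, giving $w_n^@(\xi)\ge w_n(\xi)-n+1$. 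Your proposal omits this separability reduction entirely, and without it the claimed inequality does not follow in the inseparable case.

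A smaller issue: you assume "once $H(P)$ is large enough" that $|\xi-\alpha|<|\alpha-\alpha_i|$ for all $\alpha_i\ne\alpha$, but this is not automatic; several roots can be equidistant from $\xi$ (the paper's proof of Theorem~\ref{Th:2.0} explicitly introduces the parameter $r$ to track how many roots tie for closest). This is a secondary point compared with the missing separability reduction, but it would need to be addressed in a complete proof.
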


In 2019 Ooto \cite{Oo19} proved the existence of $T^*$-numbers and, consequently, that of 
$T$-numbers. 
His proof is fundamentally different from that of the existence of real 
$T$-numbers by Schmidt \cite{Schm70,Schm71}, whose 
complicated construction rests on a result of Wirsing \cite{Wir71} 
(alternatively, one can use a consequence of 
Schmidt Subspace Theorem) on the approximation to real algebraic numbers by algebraic numbers of lower degree. 
In the power series setting, no analogue of Schmidt Subspace Theorem, or even to Roth Theorem, holds: 
Liouville's result is best possible, as was shown by Mahler \cite{Mah49}. 

Theorem \ref{Th:Tnbs} is an immediate consequence of the following statement. 

\begin{theorem}
\label{Th:wineq}
Let $\xi$ be in $\F_q ((T^{-1}))$ and $n$ be a positive integer. Then, we have
$$
w_n (\xi) - n + 1 \le w_n^* (\xi) \le w_n (\xi). 
$$
\end{theorem}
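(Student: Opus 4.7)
For $w_n^*(\xi) \le w_n(\xi)$, which is the easy direction, I would proceed by the standard expansion trick. Take $\alpha \in \F_q((T^{-1}))$ algebraic of degree at most $n$ with $|\xi - \alpha| < H(\alpha)^{-w^* - 1}$, and let $P \in \F_q[T][X]$ denote its minimal polynomial, so $H(P) = H(\alpha)$. Factoring
$$
P(X) - P(Y) = (X - Y)\, \widetilde Q(X, Y), \qquad \widetilde Q \in \F_q[T][X, Y],
$$
with the coefficients of $\widetilde Q$ among those of $P$, the ultrametric inequality gives $|\widetilde Q(\xi, \alpha)| \le H(P)\, \max(1, |\xi|, |\alpha|)^{n-1}$. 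Since $|\alpha| \le \max(|\xi|, |\xi - \alpha|) \le \max(|\xi|, 1)$ for $H(\alpha)$ large, this is $\le C(\xi, n)\, H(P)$, so $|P(\xi)| \le C\, H(P)^{-w^*}$, yielding $w_n(\xi) \ge w_n^*(\xi)$.

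For $w_n^*(\xi) \ge w_n(\xi) - n + 1$, I invoke Theorem \ref{Th:2.0} so that it suffices to produce $\alpha \in C_\infty$. Fix $w < w_n(\xi)$ and take $P$ primitive of degree $d \le n$ with $|P(\xi)| < H(P)^{-w}$. The non-archimedean Gauss identity $H(P_1 P_2) = H(P_1)\, H(P_2)$ for primitive factors implies that if $P$ is reducible over $\F_q[T]$, then one of the factors $P_i$ satisfies $|P_i(\xi)| < H(P_i)^{-w}$; combined with induction on $n$, this reduces the problem to $P$ irreducible of degree exactly $n$, so that $H(\alpha) = H(P)$ for every root $\alpha \in C_\infty$ of $P$. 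Take $\alpha$ to be the root closest to $\xi$. When $|\xi - \alpha|$ is strictly smaller than the pairwise root separations, the ultrametric gives $|\xi - \alpha_j| = |\alpha - \alpha_j|$ for $j \neq 1$, so
$$
|\xi - \alpha| = \frac{|P(\xi)|}{|P'(\alpha)|}.
$$
Invoking the Liouville-type discriminant inequality $|\mathrm{disc}(P)| \ge 1$ (valid when $P$ is separable), one extracts a lower bound on $|P'(\alpha)| = |a_n| \prod_{j \neq 1}|\alpha - \alpha_j|$ of the order $H(P)^{2-n}$, yielding $|\xi - \alpha| \le H(P)^{-w + n - 2} = H(\alpha)^{-w + n - 2}$, as required.

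The main obstacle is the inseparable case, where $\mathrm{disc}(P) = 0$ and the above Liouville bound degenerates. Since an inseparable $P$ in characteristic $p$ factors as $P(X) = R(X^{p^e})$ for some separable $R$ and $e \ge 1$, one has $|P(\xi)| = |R(\xi^{p^e})|$, transferring the approximation problem for $\xi$ to one for $\xi^{p^e}$ by roots of $R$. The Frobenius identity $|\xi - \gamma|^{p^e} = |\xi^{p^e} - \gamma^{p^e}|$ valid on $C_\infty$ then pulls the approximant back, but the naive bookkeeping loses a factor of $p^e$ in the exponent; the delicate point is to show that the inseparable polynomial cannot actually improve on the best separable approximation of degree $n/p^e$ to $\xi^{p^e}$, recovering the desired exponent $w_n - n + 1$. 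The absence of Rolle's theorem in characteristic $p$---replaced here by the Newton polygon of $P(X + \xi)$, whose first slope encodes the distance to the closest root---is what makes this analysis subtle, and is explicitly flagged by the authors as a source of difficulty.
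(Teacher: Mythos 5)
Your separable case is essentially right and follows the same skeleton as the paper: reduce to irreducible $P$, take the closest root $\alpha$, and bound $|\xi-\alpha| \le |P(\xi)|/|P'(\alpha)|$ with the Liouville-type bound $|P'(\alpha)| \gg H(P)^{2-n}$ (the paper quotes this step from Ooto's Lemma 5.4). You also correctly reduce to producing $\alpha\in C_\infty$ via Theorem \ref{Th:2.0}, which is exactly what the paper does.

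The genuine gap is in the inseparable case, and it is fatal because that case is the entire content of the theorem. You propose writing $P(X)=R(X^{p^e})$ with $R$ separable and passing to the approximation of $\xi^{p^e}$ by roots of $R$. As you yourself note, this loses a factor of $p^e$ in the exponent, and that loss is precisely what gives the previously known bound of Ooto, namely $w_n(\xi)/p^k - n + 2/p^k - 1 \le w_n^@(\xi)$, not the claimed $w_n(\xi) - n + 1$. Your remark that ``the delicate point is to show that the inseparable polynomial cannot actually improve on the best separable approximation'' identifies the difficulty but does not supply an argument; invoking Newton polygons in place of Rolle is a heuristic, not a proof.

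The paper's resolution is Lemma \ref{CartOp}, whose idea is orthogonal to your transfer to $\xi^{p^e}$. Starting from $P(X)=\sum_i Q_{p^j i}(T)\,X^{p^j i}$ with $|P(\xi)|<H(P)^{-w}$, one applies the Cartier operators $\Lambda_s$ to the \emph{coefficients} $Q_{p^j i}(T)$ (not to $\xi$), obtaining polynomials $G_s(X)=\sum_i \Lambda_s(Q_{p^j i})\,X^i$ of degree $\le n/p^j$ satisfying $G_s(\xi)=\Lambda_s\bigl(P(\xi)\bigr)$ and $P(\xi)=\sum_s T^s G_s(\xi)^{p^j}$. Because the valuations $-s+p^j\nu(G_s(\xi))$ are pairwise distinct, some $G_s$ has $|G_s(\xi)|=|P(\xi)|^{1/p^j}$, while $H(G_s)\le H(P)^{1/p^j}$. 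These two $p^j$-th roots cancel, so one gets $0<|G_s(\xi)|\le H(G_s)^{-w}$ with the \emph{same} exponent $w$, and iterating yields a separable polynomial small at $\xi$ itself with no degradation. This equality $w_n=w_n^{\rm sep}$ is the key new input your sketch is missing; without it (or an equivalent device) the transfer-to-$\xi^{p^e}$ route cannot reach the stated bound.
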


Theorem \ref{Th:wineq} answers \cite[Problem 5.8]{Oo17} 
and improves \cite[Proposition 5.6]{Oo17}, which asserts that, 
for any positive integer $n$ and any $\xi$ in $\F_q ((T^{-1}))$, we have
$$
\frac{w_n (\xi)}{p^k} - n + \frac{2}{p^k}  - 1 \le w_n^@ (\xi) \le w_n (\xi),
$$
where $k$ is the integer defined by $p^k \le n < p^{k+1}$.

Our next result is, in part, a metric statement. It provides a power series 
analogue to classical statements already established in the real and in the $p$-adic settings. 
Throughout this paper, `almost all' 
always refer to the Haar measure on $\F_q ((T^{-1}))$.

\begin{theorem}
\label{Th:Metric}
For any positive integer $n$ and any $\xi$ in $\F_q ((T^{-1}))$ not algebraic of degree $\le n$, the equality
$w_n^* (\xi) = n$ holds as soon as $w_n (\xi) = n$. 
Almost all power series $\xi$ in $\F_q ((T^{-1}))$ satisfy 
$w_n^* (\xi) = n$ for every $n \ge 1$. 
\end{theorem}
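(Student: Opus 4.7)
The theorem splits into a deterministic implication and a metric consequence, and I would prove them in that order.

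For the deterministic half, Theorem~\ref{Th:wineq} immediately yields $w_n^*(\xi) \le w_n(\xi) = n$, so the task reduces to proving the matching lower bound $w_n^*(\xi) \ge n$. This is a Dirichlet-type theorem for algebraic approximation, which I would establish universally (for every $\xi \in \F_q((T^{-1}))$ not algebraic of degree $\le n$): for each sufficiently large $H$ there is an algebraic $\alpha$ of degree $\le n$ and height $\le H$ with $|\xi-\alpha| \le c\,H^{-n-1}$. The construction starts from a Minkowski/pigeonhole argument on the $\F_q[T]$-lattice spanned by $1,\xi,\ldots,\xi^n$, producing a polynomial $P$ of degree $\le n$ with $|P(\xi)|$ small. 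In the separable case, a Hensel lift applied at $\xi$ gives a unique root $\alpha \in \F_q((T^{-1}))$ of $P$ satisfying $|\xi-\alpha| = |P(\xi)|/|P'(\xi)|$, and once the leading coefficient and $|P'(\xi)|$ are arranged to be as large as allowed by the height, the required exponent $n+1$ on $H(\alpha)$ drops out after applying $H(\alpha) \le c\,H(P)$ (a Gelfond/Mahler-measure estimate). In the non-separable case the Hensel step fails; one instead factors $P$ over $C_\infty$, takes the root closest to $\xi$, and invokes Theorem~\ref{Th:2.0} to transfer the approximation back into $\F_q((T^{-1}))$.

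For the metric half, in view of the deterministic half it is enough to show that almost every $\xi \in \F_q((T^{-1}))$ satisfies $w_n(\xi) = n$ for every $n \ge 1$. The lower bound $w_n(\xi) \ge n$ is universal (Minkowski on polynomials). For the upper bound I would run a standard Borel-Cantelli argument: for each $\epsilon > 0$, each $n$, and each $h \ge 1$, bound the Haar measure of
$$
\bigcup_{P}\,\{\xi \in \F_q[[T^{-1}]] : 0 < |P(\xi)| \le q^{-h(n+\epsilon)}\},
$$
where $P$ runs over the $O(q^{h(n+1)})$ polynomials in $\F_q[T][X]$ of degree $\le n$ and height $q^h$. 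Each inner set is covered by ultrametric neighborhoods of the roots of $P$ in $C_\infty$, whose sizes are estimated once the leading coefficient of $P$ and the mutual distances between the roots are accounted for. The total measure is summable in $h$, so Borel-Cantelli gives $w_n(\xi) \le n+\epsilon$ almost surely; intersecting over a countable sequence $\epsilon \downarrow 0$ and over $n$ finishes the job.

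The principal obstacle is the universal lower bound $w_n^*(\xi) \ge n$ in the deterministic half. The naive attempt---factor a near-vanishing polynomial $P$ over $C_\infty$ and take the closest root---does not reach the exponent $n+1$ on $H(\alpha)$, because the leading coefficient of $P$ cannot be controlled tightly enough from the bare inequality $|P(\xi)| \le H(P)^{-n}$. A refined Minkowski construction yielding $P$ with $|P'(\xi)|$ large, together with Theorem~\ref{Th:2.0} to absorb the non-separable case (precisely the obstruction flagged right after Theorem~\ref{Th:2.0}, where the absence of a Rolle-type lemma in positive characteristic is noted), is what makes the argument go through.
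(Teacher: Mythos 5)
Your proposal has a serious gap in the deterministic half. You reduce the first assertion to the \emph{universal} lower bound $w_n^*(\xi)\ge n$ for every $\xi$ not algebraic of degree $\le n$, and you sketch a Minkowski--plus--Hensel argument to obtain it. But that universal inequality is the function-field analogue of Wirsing's problem, which is open here just as it is in the real case for $n\ge 3$. Neither a ``refined Minkowski construction yielding $P$ with $|P'(\xi)|$ large'' nor an appeal to Theorem~\ref{Th:2.0} produces it: the Bernik--Tishchenko-type argument in the proof of Theorem~\ref{WirsUnif} gives only the conditional estimate $\hw_n(\xi)\ge n+(n+1-A)/(A-2)$ with $A=1+w_n^*(\xi)$, and the strongest unconditional lower bound available in this setting is Guntermann's $w_n^@(\xi)\ge(n+1)/2$, sharpened to \eqref{GuW}. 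The paper instead deduces the first assertion directly from Theorem~\ref{Th:wineq} together with inequality~\eqref{eqWir},
$$
w_n^*(\xi)\ \ge\ \hw_n^@(\xi)\ \ge\ \frac{w_n(\xi)}{w_n(\xi)-n+1},
$$
whose right-hand side equals $n$ precisely when $w_n(\xi)=n$ and drops strictly below $n$ otherwise. The lower bound you need is therefore conditional on $w_n(\xi)=n$, and the conditional inequality is already in hand; there is no need (and no way, with current tools) to upgrade it to a universal statement.

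For the metric half, the paper simply quotes Sprind\v{z}uk's theorem that $w_n(\xi)=n$ for every $n$ and almost every $\xi$, and then applies the first assertion. Your ``standard Borel--Cantelli'' sketch does not reach this conclusion: for a degree-$n$ polynomial with a $k$-fold cluster of roots near $\xi$, the measure of $\{\xi:|P(\xi)|\le\delta\}$ is of order $(\delta/|a_n|)^{1/k}$ rather than $\delta/|a_n|$, and summing these contributions over all $\approx q^{h(n+1)}$ polynomials of height $q^h$ with $\delta=q^{-h(n+\epsilon)}$ diverges. Controlling the clustered-root contribution is precisely the content of Sprind\v{z}uk's solution of Mahler's problem, which is a substantial theorem in its own right; it should be cited, not re-derived as if it were routine.
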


The first assertion of Theorem \ref{Th:Metric} was stated without proof, and with $w_n^@$ in place 
of $w_n^*$, at the end of \cite{Gu96}. 
It follows immediately from Theorem \ref{Th:wineq} combined with \eqref{eqWir} below, which 
implies that $w_n^* (\xi) \ge n$ holds as soon as we have $w_n (\xi) = n$. 
By a metric result of Sprind\v zuk \cite{Spr69}, stating that almost all 
power series $\xi$ in $\F_q ((T^{-1}))$ satisfy 
$w_n (\xi) = n$ for every $n \ge 1$, this gives the second assertion.

Chen \cite{Chen18} established that, for any $n \ge 1$ and any real number $w \ge n$, the set 
of power series $\xi$ in $\F_q((T^{-1}))$ such that $w_n (\xi) = w$ (\resp $w_n^@ (\xi) = w$) has Hausdorff 
dimension $(n+1)/(w+1)$. In view of Theorem \ref{Th:2.0}, her result also holds for $w_n^*$ in place of $w_n^@$.

As observed by Ooto \cite[Lemma 5.5]{Oo17}, 
it follows quite easily from the theory of continued fractions that $w_1 (\xi) = w_1 (\xi^p)$ for 
every $\xi$ in $\F_q ((T^{-1}))$.
This invariance property extends to the exponents $w_n$ and $w_n^*$. 

\begin{theorem}
\label{Th:powerp} 
Let $\xi$ be in $\F_q ((T^{-1}))$ and $n$ be a positive integer. Then, we have 
$$
w_n (\xi) = w_n (\xi^p), \quad w_n^* (\xi) = w_n^* (\xi^p).
$$
\end{theorem}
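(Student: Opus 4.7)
The plan is to prove both equalities using the Frobenius endomorphism $\phi(x)=x^p$ on $\F_q((T^{-1}))$ (which gives $|\phi(x)|=|x|^p$) together with the decomposition $\F_q[T] = \bigoplus_{j=0}^{p-1} T^j\F_q[T]^p$, valid because $\F_q$ is perfect, so that $\F_q[T]^p=\F_q[T^p]$.

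For $w_n(\xi)=w_n(\xi^p)$, the inequality $w_n(\xi)\le w_n(\xi^p)$ follows from the map $P(X)=\sum a_i X^i\mapsto \tilde P(X):=\sum a_i^p X^i$, which satisfies $\tilde P(\xi^p)=P(\xi)^p$, $H(\tilde P)=H(P)^p$, and preserves degree. For the reverse, given $Q(X)=\sum b_i X^i$ of degree $\le n$, expand each coefficient as $b_i=\sum_{j=0}^{p-1}T^j c_{i,j}^p$ and observe
\[
Q(X^p)=\sum_{j=0}^{p-1}T^j R_j(X)^p,\qquad R_j(X):=\sum_i c_{i,j}X^i,
\]
with $\deg R_j\le n$ and $H(R_j)\le H(Q)^{1/p}$. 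Evaluated at $\xi$, each summand lies in the complementary component $T^j\F_q((T^{-p}))$, so their valuations are distinct modulo $p$ and cannot cancel; consequently $q^j|R_j(\xi)|^p\le|Q(\xi^p)|<H(Q)^{-w}$, and hence $|R_j(\xi)|\le H(R_j)^{-w}$ whenever $R_j(\xi)\ne 0$. A standard argument as $Q$ ranges over the approximating family then produces infinitely many distinct $R_j$, delivering $w_n(\xi)\ge w_n(\xi^p)$.

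For $w_n^*(\xi)=w_n^*(\xi^p)$, the easy direction $w_n^*(\xi^p)\ge w_n^*(\xi)$ is handled by sending $\beta$ to $\alpha:=\beta^p$: this $\alpha$ is algebraic of degree dividing $\deg\beta\le n$, and $|\xi^p-\alpha|=|\xi-\beta|^p$. In the separable case $\tilde B$ is the minimal polynomial of $\alpha$, giving $H(\alpha)=H(\beta)^p$ and preserving the exponent exactly; the inseparable case $B(X)=B_1(X^p)$ gives $H(\alpha)=H(\beta)$ with a strictly better exponent.

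The reverse direction $w_n^*(\xi)\ge w_n^*(\xi^p)$ is the main obstacle. Given $\alpha\in\F_q((T^{-1}))$ algebraic of degree $d\le n$, with minimal polynomial $A$ and $|\xi^p-\alpha|<H(\alpha)^{-w-1}$, I would apply the same decomposition to $A$ to get $A(X^p)=\sum_j T^jR_j(X)^p$, combine the Taylor estimate $|A(\xi^p)|\le c\,H(A)\cdot|\xi^p-\alpha|$ (valid at the simple root $\alpha$, after reducing to the separable case) with the non-cancellation argument, and conclude that each $|R_j(\xi)|$ is correspondingly small. When $\alpha\in\F_q((T^{-p}))$, the element $\beta=\alpha^{1/p}$ lies in $\F_q((T^{-1}))$ and is a common root of all the $R_j$; showing $\deg\beta\le d$ then amounts to verifying that $A(X^p)$ is a perfect $p$-th power in $\F_q[T][X]$, which holds precisely because every root of $A$ then lies in $\F_q((T^{-p}))$. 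The genuine difficulty is the case $\alpha\notin\F_q((T^{-p}))$: the $\F_q$-linear projection of $\alpha$ onto $\F_q((T^{-p}))$ need not be algebraic, so $\beta=\alpha^{1/p}$ does not live in $\F_q((T^{-1}))$. I would try to replace $\alpha$ by a nearby algebraic $\alpha''\in\F_q((T^{-p}))$ of degree $\le n$ and height comparable to $H(\alpha)$, either by a Hensel-type refinement exploiting that the non-$p$-divisible part of $\alpha$ has valuation at most $|\xi^p-\alpha|$, or by using Theorem~\ref{Th:2.0} to extract a root of some $R_j$ directly in $C_\infty$. The crucial constraint, and what I expect to be the hardest point, is to carry this out without losing an $O(n)$ term in the exponent, since a coarser passage from ``polynomial small at $\xi$'' to ``algebraic approximant near $\xi$'' would only reproduce the weaker estimate already implied by Theorem~\ref{Th:wineq}.
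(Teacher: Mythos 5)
Your argument for $w_n(\xi)=w_n(\xi^p)$ is correct and is essentially the same as the paper's (Theorem~\ref{invsep}): the forward direction by raising coefficients to the $p$-th power, and the reverse by the Cartier decomposition $Q(X^p)=\sum_{j=0}^{p-1}T^j R_j(X)^p$ together with the observation that the summands have pairwise distinct valuations mod $p$, so one of the $R_j$ is small at $\xi$ with the right exponent. The paper packages this as Lemma~\ref{CartOp}; you apply the same decomposition directly, which works equally well.

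For $w_n^*(\xi)=w_n^*(\xi^p)$ the easy direction ($\beta\mapsto\beta^p$) matches the paper. But the hard direction $w_n^*(\xi)\ge w_n^*(\xi^p)$ is where your proposal has a genuine gap. You correctly identify the obstruction: given $\alpha\in\F_q((T^{-1}))$ close to $\xi^p$ but with $\alpha\notin\F_q((T^{-p}))$, there is no way to take a $p$-th root of $\alpha$ inside $\F_q((T^{-1}))$, the Cartier projection of $\alpha$ need not be algebraic, and a crude passage through the polynomial $|A(\xi^p)|\le H(A)\,|\xi^p-\alpha|$ and Theorem~\ref{Th:wineq} costs an $O(n)$ term in the exponent. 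You list two candidate fixes (a Hensel-type refinement, or invoking Theorem~\ref{Th:2.0}) without carrying either out, and you explicitly flag the exponent loss as the unresolved point. That flag is accurate: what you have written does not close the argument.

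The step you are missing is the paper's base-change trick, which is what makes the appeal to Theorem~\ref{Th:2.0} lossless. Set $U=T^p$ and view $\xi^p$ as an element of $\F_q((U^{-1}))$. The approximant $\alpha\in\F_q((T^{-1}))$ then lies in the algebraic closure of $\F_q((U^{-1}))$ (since $\F_q((T^{-1}))/\F_q((U^{-1}))$ is a degree-$p$ algebraic extension), so $\alpha$ witnesses a large value of the exponent $w_n^{@}(\xi^p)$ computed in the field $\F_q((U^{-1}))$. Applying Theorem~\ref{Th:2.0} in $\F_q((U^{-1}))$ converts this into approximants $\beta\in\F_q((U^{-1}))=\F_q((T^{-p}))$ of degree $\le n$ and arbitrarily large height with $|\xi^p-\beta|<H(\beta)^{-w-1+\varepsilon}$; since every such $\beta$ is $\gamma^p$ for some $\gamma\in\F_q((T^{-1}))$, one deduces $|\xi-\gamma|<H(\gamma)^{-w-1+\varepsilon}$ and hence $w_n^*(\xi)\ge w_n^*(\xi^p)$. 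This is precisely the mechanism that avoids the $O(n)$ loss you were worried about, because Theorem~\ref{Th:2.0} gives an exact equality rather than the coarse inequality of Theorem~\ref{Th:wineq}. Without working this (or something equivalent) out, your proof of the second equality is incomplete.
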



It follows from Liouville's inequality (see e.g., \cite[Theorem 5.2]{Oo17}) that, for any $n \ge 1$ and any 
algebraic power series $\xi$ in $\F_q((T^{-1}))$ of degree $d$, we have 
$$
w_n^* (\xi) \le w_n (\xi) \le d-1.
$$
Mahler's example \cite{Mah49} of the root $T^{-1} + T^{-p} + T^{-p^2} + \ldots$ 
of $X^p - X + T^{-1}$ shows that there are algebraic power series $\xi$ in $\F_p((T^{-1}))$ of degree $p$ with 
$w_1 (\xi) = p-1$. 
For further results on Diophantine exponents of approximation of algebraic power series, the 
reader is directed to \cite{Chen13,Tha11,Tha13,Fir13}
and the references given therein.

The present paper is organized as follows.
Further exponents of approximation are defined in Section 2 and (in)equalities between 
them are stated. Auxiliary results are gathered in Section 3, while the next two sections are devoted 
to proofs. Several open questions are listed in Section 6.

Throughout this paper, the notation $\ll$, $\gg$ means that there is an implicit, absolute, 
positive constant.

\section{Uniform exponents and two inequalities between exponents}

A difficulty occurring in the proof of the metric statement of Sprind\v zuk mentioned 
in the previous section is caused by the fact that the polynomials which are very small 
at a given power series could be inseparable. Or, said differently, by the possible existence 
of power series $\xi$ for which $w_n (\xi)$ exceeds $w_n^{{\rm sep}} (\xi)$, where 
$w_n^{{\rm sep}}$ is defined exactly as $w_n$, but with the extra requirement that 
the polynomials $P(X)$ have to be separable. The next result shows that such power series 
do not exist. Before stating it, we define several exponents of uniform approximation.

\begin{definition}
\label{Def:3.1}
Let $\xi$ be in $\F_q ((T^{-1}))$. Let $n \ge 1$ be an integer.
We denote by
$\hw_n (\xi)$ (resp., $\hw_n^{{\rm sep}} (\xi)$) 
the supremum of the real numbers $\hw$ for which there exists 
an integer $H_0$ such that, for every $H > H_0$, there exists a polynomial $P(X)$ 
(resp., a separable polynomial $P(X)$) over $\F_q[T]$ of
degree at most $n$ and height at most $H$ such that 
$$
0 < |P(\xi )| < H^{- \hw}. 
$$
We denote by
$\hw_n^* (\xi)$ the supremum of the real numbers $\hw^*$ for which there exists 
an integer $H_0$ such that, for every $H > H_0$, there exists an
algebraic power series $\alpha$ in $\F_q ((T^{-1}))$ of
degree at most $n$ and height at most $H$ such that 
$$
0 < |\xi - \alpha| < H(\alpha)^{-1} \, H^{-\hw^*}. 
$$
We denote by
$\hw_n^@ (\xi)$ the supremum of the real numbers $\hw^@$ for which there exists 
an integer $H_0$ such that, for every $H > H_0$, there exists an
algebraic power series $\alpha$ in $\F_q ((T^{-1}))$ of
degree at most $n$ and height at most $H$ such that 
$$
0 < |\xi - \alpha| < H(\alpha)^{-1} \, H^{-\hw^@}. 
$$
\end{definition}

For any power series $\xi$ and any $n \ge 1$, we have clearly
$$
\hw_n^{{\rm sep}} (\xi) \le \hw_n  (\xi) 
\quad \hbox{and} \quad
\hw_n^* (\xi) \le \hw_n^@ (\xi). 
$$
The first of these is an equality.

\begin{theorem}
\label{invsep}
Let $\xi$ be in $\F_q ((T^{-1}))$ and $n$ a positive integer. Then, we have 
$$
w_n (\xi) = w_n^{{\rm sep}} (\xi), \quad \hw_n (\xi) = \hw_n^{{\rm sep}} (\xi),
$$
and
$$
w_n (\xi) = w_n(\xi^p), \quad \hw_n (\xi) = \hw_n(\xi^p). 
$$
\end{theorem}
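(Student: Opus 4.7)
The plan is to establish the Frobenius invariance first and then use it as a tool, together with an induction on $n$, to eliminate inseparable polynomials. For $w_n(\xi)=w_n(\xi^p)$, one direction is easy: given $P(X)=\sum a_i X^i$ with $|P(\xi)|<H(P)^{-w}$, the Frobenius twist $P^{(p)}(X):=\sum a_i^p X^i$ satisfies $P^{(p)}(\xi^p)=P(\xi)^p$, has the same degree as $P$, and has height $H(P)^p$, giving $|P^{(p)}(\xi^p)|<H(P^{(p)})^{-w}$ and therefore $w_n(\xi^p)\ge w_n(\xi)$. For the converse I would exploit that $\F_q[T]$ is free of rank $p$ over its subring $\F_q[T]^p=\F_q[T^p]$ with basis $\{1,T,\ldots,T^{p-1}\}$. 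Given $Q(X)=\sum b_i X^i$ with $|Q(\xi^p)|<H(Q)^{-w}$, I expand each coefficient uniquely as $b_i=\sum_{j=0}^{p-1}c_{i,j}^p\,T^j$ with $c_{i,j}\in\F_q[T]$ and set $R_j(X):=\sum_i c_{i,j}X^i$; the Frobenius identity then yields
$$ Q(\xi^p)=\sum_{j=0}^{p-1}T^j\,R_j(\xi)^p. $$
The summands sit in the pairwise-disjoint cosets $T^j\cdot\F_q((T^{-p}))$, whose valuations are distinct modulo $p$, so the ultrametric forces $|Q(\xi^p)|=\max_j q^j|R_j(\xi)|^p$; the same inspection of valuations produces the height bound $H(R_j)\le H(Q)^{1/p}$, and combining these gives $|R_j(\xi)|<H(R_j)^{-w}$ for every nonzero $R_j$. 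Since $Q\mapsto (R_0,\ldots,R_{p-1})$ is a bijection, infinitely many distinct $Q$'s yield infinitely many distinct tuples, and pigeonhole on the coordinates produces infinitely many distinct $R_{j^*}$ witnessing $w_n(\xi)\ge w$. The identical construction, applied to the uniform family with the height parameter rescaled by a $p$-th power, gives $\hw_n(\xi)=\hw_n(\xi^p)$.

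For $w_n(\xi)=w_n^{{\rm sep}}(\xi)$ I would induct on $n$, the case $n=1$ being immediate since every non-constant polynomial of degree $\le 1$ is separable. For the inductive step, let $P$ of degree $\le n$ be inseparable with $|P(\xi)|<H(P)^{-w}$, and factor $P=\prod P_i^{e_i}$ into distinct $\F_q[T]$-irreducibles. Gauss's lemma, which gives $H(P)=\prod H(P_i)^{e_i}$, combined with $|P(\xi)|=\prod|P_i(\xi)|^{e_i}<H(P)^{-w}$, forces some factor $P_i$ to satisfy $|P_i(\xi)|<H(P_i)^{-w}$. If this $P_i$ is separable we keep it; otherwise, being irreducible and inseparable, it has the form $P_i(X)=Q_i(X^{p^{l_i}})$ for a unique maximal $l_i\ge 1$, where $Q_i$ is an irreducible separable polynomial of degree $\deg P_i/p^{l_i}\le n/p$ with $H(Q_i)=H(P_i)$, so that $|Q_i(\xi^{p^{l_i}})|<H(Q_i)^{-w}$. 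Iterating the Frobenius extraction of the first paragraph $l_i$ times produces a polynomial of degree $\le n/p<n$ approximating $\xi$ with exponent $w$. Running this procedure over the infinite family of $P$'s, and pigeonholing either on $l_i$ (when the extracted $P_i$'s are infinite in number) or on the quotients $P/P_i^*$ of degree strictly less than $n$ (when only finitely many distinct $P_i$'s appear), I reduce to the inductive hypothesis at degree $<n$, which supplies infinitely many separable polynomials of degree $\le n$ approximating $\xi$ with the same exponent. The uniform version $\hw_n(\xi)=\hw_n^{{\rm sep}}(\xi)$ is handled by the same induction with "infinitely many" replaced by "for all sufficiently large $H$".

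The main difficulty will lie in the bookkeeping of the inductive step: ensuring that as the height of $P$ becomes arbitrarily large, the extracted approximants (irreducible factors, their Frobenius descents, or the quotients $P/P_i^*$) do not collectively collapse into a finite set, so that the inductive hypothesis at degree $m<n$ can be legitimately invoked to supply an unbounded family of separable approximants to $\xi$.
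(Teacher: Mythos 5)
Your Frobenius-invariance arguments ($w_n(\xi)=w_n(\xi^p)$ and $\hw_n(\xi)=\hw_n(\xi^p)$) are in substance identical to the paper's: expanding coefficients over the free $\F_q[T^p]$-module basis $\{1,T,\dots,T^{p-1}\}$ and isolating the dominant coset is exactly the Cartier-operator decomposition the paper uses, and the reverse direction via $a_i\mapsto a_i^p$ matches as well. Two details you should make explicit: you need to rule out $\xi$ algebraic of degree $\le n$ (otherwise the extracted family of $R_{j^*}$ can collapse to a finite set or hit a zero of $R_{j^*}$), and the argument that the $R_{j^*}$ do not lie in a finite set is best phrased via $|R_{j^*}(\xi)|^p\le|Q(\xi^p)|\to 0$ rather than via the bijection $Q\leftrightarrow(R_0,\dots,R_{p-1})$, since that bijection alone does not prevent a fixed coordinate from repeating.

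For $w_n=w_n^{\rm sep}$ you take a genuinely different route. The paper (Lemma \ref{CartOp}) never factors $P$: it applies the Cartier decomposition directly to $P$, obtaining a single $G_s$ of degree $\le \lfloor n/p^j\rfloor$ with $|G_s(\xi)|=|P(\xi)|^{1/p^j}$ and $H(G_s)\le H(P)^{1/p^j}$, so the exponent $w$ is preserved \emph{exactly} and one simply iterates; no induction, no Gauss's lemma, no pigeonhole. Your plan (factor into $\F_q[T]$-irreducibles, use multiplicativity of the Gauss norm to extract a factor $P_i$ with $|P_i(\xi)|<H(P_i)^{-w}$, descend if inseparable, induct on $n$) is plausible, and the concern you flag at the end is genuine but resolvable: when the extracted factors lie in a finite set, pigeonhole a fixed $R^e\mid P_k$ and pass to $P_k/R^e$, which has degree $<n$, height $\to\infty$, and $|P_k/R^e(\xi)|<C\cdot H(P_k/R^e)^{-w}$ for a constant $C>1$; this gives exponent $w-\eps$ and the $\eps$ disappears in the supremum. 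So the non-uniform statement can be pushed through this way, at the cost of extra bookkeeping that the paper's direct descent avoids entirely.

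Where I think there is a real gap is your one-line dispatch of $\hw_n=\hw_n^{\rm sep}$ by "the same induction with `infinitely many' replaced by `for all large $H$'." The factorization step breaks in the uniform setting: given $P$ of height $\le H$ with $|P(\xi)|<H^{-\hw}$, the extracted irreducible factor $P_i$ satisfies $|P_i(\xi)|<H(P_i)^{-\hw}$, but $H(P_i)$ may be drastically smaller than $H$, so this does \emph{not} yield $|P_i(\xi)|<H^{-\hw}$ — the bound is against the wrong reference height. The paper handles exactly this with a separate and more delicate argument (Lemma \ref{lem:pr} plus an infimum-and-contradiction scheme): after descending the inseparable part $B$ to a $B_0$ of smaller degree, it multiplies $B_0^{p^r}$ \emph{back} into the separable part, i.e. considers $A(X)B_0(X)^{p^r}$, so that the resulting polynomial still has degree $\le n$ and height $\le H$ while its inseparable part has strictly smaller degree. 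That "multiply back in to stay within the height budget" idea is the essential extra ingredient missing from your uniform-case plan, and without it your induction cannot be closed in the $\hw$ setting.
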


To prove Theorem \ref{Th:Metric}, 
we establish the following inequalities.

\begin{theorem}
\label{WirsUnif}
Let $n \ge 1$ be an integer.  The lower bounds
\begin{equation} \label{eqWir}
w_n^* (\xi) \ge \hw_n^@ (\xi) \ge  \frac{w_n (\xi)}{w_n (\xi) -n +1}  
\end{equation}
and
$$
{w}_n^* (\xi) \ge \frac{\hw_n (\xi)}{\hw_n  (\xi) -n +1} 
$$
hold for any power series $\xi$ which is not
algebraic of degree  $\le n$.
\end{theorem}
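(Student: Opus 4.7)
The first inequality $w_n^*(\xi) \geq \hat{w}_n^@(\xi)$ is immediate from Theorem~\ref{Th:2.0}, which gives $w_n^*(\xi) = w_n^@(\xi)$, combined with the trivial relation $w_n^@(\xi) \geq \hat{w}_n^@(\xi)$: for each sufficiently large $H$, any witnessing $\alpha$ of height at most $H$ with $|\xi - \alpha| < H(\alpha)^{-1} H^{-\hat{w}^@}$ also satisfies $|\xi - \alpha| < H(\alpha)^{-1-\hat{w}^@}$ since $H \geq H(\alpha)$, and letting $H$ run through powers of $q$ produces infinitely many distinct such $\alpha$ because $\xi$ is not algebraic of degree at most $n$.

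The plan is then to reduce the remaining two Wirsing-type lower bounds to the single uniform statement
\[
\hat{w}_n^@(\xi) \;\geq\; \frac{\hat{w}_n(\xi)}{\hat{w}_n(\xi) - n + 1}.
\]
Since $\hat{w}_n(\xi) \leq w_n(\xi)$ and $x \mapsto x/(x - n + 1)$ is (weakly) decreasing on $(n-1, +\infty)$, the displayed bound immediately yields $\hat{w}_n^@(\xi) \geq w_n(\xi)/(w_n(\xi) - n + 1)$; combining with the first inequality then delivers $w_n^*(\xi) \geq \hat{w}_n(\xi)/(\hat{w}_n(\xi) - n + 1)$, which is the second inequality of the theorem.

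To establish the displayed uniform Wirsing bound, fix $\hat{w} < \hat{w}_n(\xi)$. By Theorem~\ref{invsep}, for every sufficiently large $H$ there exists a \emph{separable} polynomial $P \in \F_q[T][X]$ of degree $d \leq n$ and height at most $H$ with $|P(\xi)| < H^{-\hat{w}}$. Factor $P(X) = a_d \prod_{i=1}^{d}(X - \alpha_i)$ over $C_\infty$ and let $\alpha$ be a root closest to $\xi$. Once $|\xi - \alpha|$ drops below the minimal conjugate gap, the ultrametric triangle inequality gives $|\xi - \alpha_i| = |\alpha - \alpha_i|$ for $i \geq 2$, so $|P(\xi)| = |a_d|\,|\xi - \alpha|\prod_{i \geq 2}|\alpha - \alpha_i|$. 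Combining the root bound $|\alpha_i| \leq H/|a_d|$ (whence the conjugate-difference upper bound $|\alpha_i - \alpha_j| \leq H/|a_d|$) with the discriminant lower bound $|\mathrm{disc}(P)| \geq 1$, valid because $\mathrm{disc}(P)$ is a non-zero element of $\F_q[T]$, and optimizing Wirsing-style over the number of conjugates that cluster near $\alpha$, one derives an estimate of the shape $|\xi - \alpha| \leq c\, H(\alpha)^{-1} H^{-\hat{w}/(\hat{w} - n + 1)}$ with $c$ depending only on $\xi$ and $n$. Since this furnishes, for every sufficiently large $H$, an algebraic element $\alpha \in C_\infty$ of degree at most $n$ and height at most $H$ meeting the required bound, letting $\hat{w}$ tend to $\hat{w}_n(\xi)$ yields the displayed inequality.

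The principal obstacle is the ultrametric Wirsing optimization itself. The naive estimate $|\xi - \alpha|^d |a_d| \leq |P(\xi)|$ delivers only the weak exponent $\hat{w}/n - 1$; to reach the sharp value $\hat{w}/(\hat{w} - n + 1)$ one must balance the discriminant lower bound against the conjugate-difference upper bound and optimize over the clustering pattern of roots, exactly as in Wirsing's original argument. The absence of Rolle's theorem is sidestepped by working directly with the factorization in $C_\infty$; the inseparability obstruction, genuinely new in positive characteristic, is removed by Theorem~\ref{invsep}, which is the crucial input beyond the archimedean template.
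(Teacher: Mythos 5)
Your first paragraph is fine: $w_n^*(\xi)\ge \hw_n^@(\xi)$ does follow from Theorem~\ref{Th:2.0} together with the trivial inequality $w_n^@\ge\hw_n^@$, and this matches the paper's (implicit) reasoning.

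The rest of the plan rests on a single intermediate claim, namely
$\hw_n^@(\xi)\ge \hw_n(\xi)/(\hw_n(\xi)-n+1)$, from which you propose to derive both remaining inequalities by monotonicity of $x\mapsto x/(x-n+1)$. Be aware that this claim is \emph{strictly stronger} than either statement in the theorem: the paper proves $\hw_n^@(\xi)\ge w_n(\xi)/(w_n(\xi)-n+1)$ (with $w_n$, not the smaller $\hw_n$, in the right-hand side, hence a weaker lower bound) and, quite separately, $w_n^*(\xi)\ge \hw_n(\xi)/(\hw_n(\xi)-n+1)$ --- a lower bound on $w_n^*$, not on the uniform quantity $\hw_n^@$. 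Your intermediate bound would imply both, but nothing in the paper, and nothing in your sketch, establishes it.

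The gap is in the sketched proof of that intermediate bound. You take a single separable polynomial $P$ of degree $\le n$ and height $\le H$ with $|P(\xi)|<H^{-\hw}$, and assert that discriminant and conjugate-gap estimates, ``optimized Wirsing-style,'' deliver a root $\alpha$ with $|\xi-\alpha|\le cH(\alpha)^{-1}H^{-\hw/(\hw-n+1)}$. That conclusion is false in general for an arbitrary witnessing polynomial. Already for $n=d=2$, if the two roots of $P$ both lie in the unit ball and cluster near $\xi$, then $H(\alpha)=|a_2|$, the discriminant bound only gives $|\alpha_1-\alpha_2|\ge |a_2|^{-1}=H(\alpha)^{-1}$, and one gets $|\xi-\alpha|=|P(\xi)|/(|a_2||\alpha_1-\alpha_2|)\le |P(\xi)|<H^{-\hw}$, which is short of the needed $H(\alpha)^{-1}H^{-\hw/(\hw-n+1)}$ by the factor $H(\alpha)^{-1}$. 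In the Wirsing/Schmidt argument the sharp exponent $w/(w-n+1)$ does \emph{not} come from analyzing an arbitrary small polynomial: one must \emph{construct} a polynomial with controlled root geometry. The paper does exactly this for \eqref{eqWir}, via Mahler's analogue of Minkowski's theorem, building $P$ with $|P(\xi)|\le H^{-w}$, $|P(T^{i_j})|\le H$ for $j<n$, and $|P(T^{i_n})|\le cH^{w-n+1}$, so that $P$ has roots near $\xi$ and near the fixed test points $T^{i_j}$; the lower bound $H(P)\gg H^{1+\eps}$ then comes from $w>w_n(\xi)$, which is precisely why the argument gives $w_n$ (and not $\hw_n$) in the denominator. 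The paper's proof of the second inequality uses a genuinely different device, the Bernik--Tishchenko idea: assuming $w_n^*(\xi)<A-1$, build a Minkowski polynomial with a skewed weight on $a_1$ and deduce $\hw_n(\xi)\ge n+(n+1-A)/(A-2)$ by contradiction; that route yields a bound on $w_n^*$, never on $\hw_n^@$. Your proposal contains neither ingredient, and the phrase ``optimizing Wirsing-style over the number of conjugates that cluster near $\alpha$'' is exactly the step that needs a proof and does not have one.
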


For completeness, we define two exponents of simultaneous approximation and 
establish that they are invariant under the map $\xi \mapsto \xi^p$.

Below, the `fractional part' $\Vert \cdot \Vert$ is defined by
$$
\Bigl\Vert \sum_{n=N}^{+ \infty} \, a_n T^{-n} \Bigr\Vert = \Bigl| \sum_{n=1}^{+ \infty} \, a_n T^{-n} \Bigr|,
$$
for every power series $\xi = \sum_{n=N}^{+ \infty} \, a_n T^{-n}$ in $\F_q ((T^{-1}))$. 

\begin{definition}
\label{Def:lambda}
Let $\xi$ be in $\F_q ((T^{-1}))$. Let $n \ge 1$ be an integer.
We denote by
$\lambda_n (\xi)$ the supremum of the real numbers $\lambda$ for which
$$
0 < \max\{ \Vert R(T) \xi \Vert, \ldots , \Vert R(T) \xi^n \Vert \} < q^{-\lambda  \deg(R)}
$$
has infinitely many solutions in polynomials $R(T)$ in $\F_q[T]$. We denote by
$\hla_n (\xi)$ the supremum of the real numbers $\hla$ for which there exists 
an integer $d_0$ such that, for every $d > d_0$, there exists a polynomial $R(T)$ in $\F_q[T]$ of
degree at most $d$ such that 
$$
0 <  \max\{ |R(T) \xi |, \ldots , |R(T) \xi^n|\}  < q^{- \hla d}. 
$$
\end{definition}

Since $\F_q$ is a finite field, requiring `infinitely many solutions in polynomials $R(T)$ in $\F_q[T]$' is equivalent 
to requiring `solutions in polynomials $R(T)$ in $\F_q[T]$ of arbitrarily large degree'.

\begin{proposition} 
\label{lahla}
Let $\xi$ be in $\F_q ((T^{-1}))$ and $n$ a positive integer. Then, we have 
$$
\lambda_n(\xi)=\lambda_n(\xi^p), \quad \hla_n(\xi)=\hla_n(\xi^p).
$$ 
\end{proposition}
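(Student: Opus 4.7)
The plan is to exploit the Frobenius endomorphism $x \mapsto x^p$ on $\F_q((T^{-1}))$ together with the $p$-adic decomposition of polynomials in $\F_q[T]$ that is available because $\F_q$ is perfect. The easy direction $\lambda_n(\xi^p) \ge \lambda_n(\xi)$ (and analogously for $\hla_n$) follows from the identity $R(T)^p(\xi^p)^k = (R(T)\xi^k)^p$: writing $R\xi^k = A_k + e_k$ with $A_k \in \F_q[T]$ and $|e_k|<1$, Frobenius gives $(R\xi^k)^p = A_k^p + e_k^p$ with $A_k^p \in \F_q[T]$, whence $\Vert R^p(\xi^p)^k\Vert = \Vert R\xi^k\Vert^p$ while $\deg R^p = p\deg R$. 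Thus any $R$ witnessing exponent $\lambda$ at $\xi$ yields $R^p$ witnessing $\lambda$ at $\xi^p$; for the uniform version, apply the hypothesis at $\lfloor d/p\rfloor$ to handle arbitrary $d$.

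For the reverse direction, the key tool is the unique decomposition
$$R(T) = \sum_{i=0}^{p-1} T^i\, U_i(T)^p, \qquad U_i \in \F_q[T],$$
obtained by extracting $p$-th roots of the coefficients of $R$ in each residue class modulo $p$; one verifies $\deg R = \max_i(i + p\deg U_i)$, so for $i^* := \deg R \bmod p$ we have $U_{i^*} \ne 0$ and $\deg R = p\deg U_{i^*} + i^*$. Substituting and applying Frobenius termwise yields
$$R(T)\xi^{pk} = \sum_{i=0}^{p-1} T^i \bigl(U_i(T)\xi^k\bigr)^p = \sum_{i=0}^{p-1} T^i A_{i,k}^p + \sum_{i=0}^{p-1} T^i e_{i,k}^p,$$
where $U_i\xi^k = A_{i,k} + e_{i,k}$ with $A_{i,k} \in \F_q[T]$ and $|e_{i,k}| = \Vert U_i\xi^k\Vert < 1$. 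The first sum is a polynomial in $T$; since $e_{i,k}^p$ has support in negative multiples of $p$, the term $T^i e_{i,k}^p$ has support in negative exponents $\equiv i \pmod{p}$, and these supports are pairwise disjoint across $i \in \{0,\ldots,p-1\}$. This forces the strong triangle inequality to be an equality, yielding
$$\Vert R(T)\xi^{pk}\Vert = \max_{0 \le i \le p-1} q^i\, \Vert U_i(T)\xi^k\Vert^p.$$
Given $R$ with $\max_k \Vert R\xi^{pk}\Vert < q^{-\lambda \deg R}$, taking $i = i^*$ yields $\Vert U_{i^*}\xi^k\Vert < q^{-\lambda \deg U_{i^*}}\cdot q^{-(\lambda+1)i^*/p}$ for each $k$. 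For transcendental $\xi$ we have $\lambda_n(\xi^p) \ge 1/n > 0$, hence we may assume $\lambda \ge 0$; then the extra factor is $\le 1$, and $U_{i^*}$ witnesses exponent $\lambda$ at $\xi$ with $\deg U_{i^*} = (\deg R - i^*)/p$.

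For the non-uniform $\lambda_n$, a pigeonhole argument on $i^* \in \{0,\ldots,p-1\}$ extracts from the infinitely many good $R$'s an infinite subfamily sharing the same residue class, producing infinitely many $U_{i^*}$ with $\deg U_{i^*} \to \infty$, and gives $\lambda_n(\xi) \ge \lambda_n(\xi^p)$. For the uniform $\hla_n$, given $d'$, applying the $\xi^p$-hypothesis at $d = pd'$ produces an admissible $U_{i^*}$ with $\deg U_{i^*} \le d'$ directly. I expect the main obstacle to be the careful verification of the exact identity $\Vert R\xi^{pk}\Vert = \max_i q^i \Vert U_i\xi^k\Vert^p$, specifically checking that the disjoint support of the $T^i e_{i,k}^p$ across residue classes modulo $p$ forces equality (and not merely the usual $\le$) in the ultrametric bound; the algebraic case of $\xi$ is a degenerate check, since $\xi$ and $\xi^p$ then generate the same field extension of $\F_q(T)$.
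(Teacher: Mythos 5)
Your proof is correct and follows essentially the same route as the paper's: the easy direction via Frobenius, and the reverse direction via the $p$-adic (Cartier operator) decomposition $R=\sum_{i=0}^{p-1}T^iU_i^p$ together with a pigeonhole on $\deg R\bmod p$, where your $U_i$ is exactly the paper's $\Lambda_i(R)$. The only cosmetic difference is that you make the ultrametric step fully explicit through the exact identity $\Vert R\xi^{pk}\Vert=\max_i q^i\Vert U_i\xi^k\Vert^p$, which the paper uses implicitly.
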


\section{Auxiliary results}

\begin{lemma}[Krasner's lemma]  \label{Kras} 
Let $K$ be a complete, algebraically closed field equipped with a non-Archimedean absolute value $| \cdot |$.  
Let $\alpha$ be an algebraic element of $K$ of degree $d$ at least equal to $2$ and separable. 
Let $\alpha = \alpha_1, \alpha_2, \ldots , \alpha_d$ be the conjugates of $\alpha$. 
For any $\beta$ in $K$ satisfying 
$$
|\alpha - \beta| < |\alpha_j - \beta|, \quad 2 \le j \le d,
$$
we have $K(\alpha) \subset K(\beta)$. 
\end{lemma}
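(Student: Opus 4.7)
The plan is to follow the classical Galois-theoretic route to Krasner's lemma. Let $F$ denote the base field over which $\alpha$ is algebraic, so that $\alpha = \alpha_1, \ldots, \alpha_d$ are the $F$-conjugates of $\alpha$. My target is to show that every $F$-automorphism $\sigma$ of $K$ fixing $F(\beta)$ pointwise also fixes $\alpha$; because $\alpha$ is separable over $F$, standard Galois theory then identifies $\alpha$ as an element of $F(\beta)$, giving $F(\alpha) \subset F(\beta)$.

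The first ingredient I would invoke is the standard fact that, on a complete non-Archimedean field, the absolute value extends uniquely to any algebraic extension. Its immediate consequence is that every $F$-automorphism $\sigma$ of $K$ is an isometry: the map $x \mapsto |\sigma(x)|$ is another absolute value on $K$ extending the one on $F$, so by uniqueness it must coincide with $|\cdot|$. I would record this isometry property as a preliminary observation and use it decisively in the next step.

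The second step is the core Krasner argument. Fix an $F$-automorphism $\sigma$ with $\sigma|_{F(\beta)} = \mathrm{id}$. Then $\sigma$ permutes the conjugates of $\alpha$, so $\sigma(\alpha) = \alpha_j$ for some index $j \in \{1,\ldots,d\}$. Combining the isometry property with $\sigma(\beta) = \beta$ gives
$$
|\alpha_j - \beta| = |\sigma(\alpha) - \sigma(\beta)| = |\alpha - \beta|.
$$
The hypothesis asserts $|\alpha - \beta| < |\alpha_k - \beta|$ for every $k \ge 2$, so the displayed equality forces $j = 1$, that is, $\sigma(\alpha) = \alpha$, as required.

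I do not anticipate a genuine obstacle: once the base field is fixed and the isometry property is in hand, the remainder of the argument is essentially forced. The single delicate point is the role of separability, which cannot be dropped in positive characteristic — the natural setting of the paper — because Galois theory identifies the subfield fixed by $\mathrm{Gal}(\overline{F}/F(\beta))$ with $F(\beta)$ only for elements separable over $F$. In the inseparable case the implication from ``$\sigma$ fixes $\alpha$ for all such $\sigma$'' to ``$\alpha \in F(\beta)$'' breaks down, which is exactly why the hypothesis appears in the statement.
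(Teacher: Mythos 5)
The paper does not give a proof of this lemma; it simply cites \cite[Section 3.4.2]{BGR84}. Your proposal fills in the standard Galois-theoretic argument, and it is essentially correct. You also correctly observe that the lemma as printed is silent about the base field $F$ over which ``degree $d$'' and ``conjugates'' are to be measured, and that the conclusion should be read as $F(\alpha)\subseteq F(\beta)$ rather than the (vacuous) $K(\alpha)\subseteq K(\beta)$; that inference is right, and your remark about separability is exactly the correct explanation of why that hypothesis is needed in positive characteristic.

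One step is stated more strongly than it can be justified, and it is worth flagging because it also exposes a hidden hypothesis in the lemma itself. You claim that every $F$-automorphism of $K$ is an isometry because ``the absolute value extends uniquely to any algebraic extension of a complete field.'' But $K$ (in the paper's setting, $C_\infty$) is not an algebraic extension of $F$: it contains elements transcendental over $F$, and uniqueness of extension of absolute values does not apply to such extensions. The standard repair is to argue inside $\overline{F}$, the algebraic closure of $F$ in $K$. Concretely: suppose $\alpha\notin F(\beta)$; since $\alpha$ is separable over $F(\beta)$ there is an $F(\beta)$-embedding $\tau\colon F(\beta)(\alpha)\hookrightarrow\overline F$ with $\tau(\alpha)=\alpha_j$ for some $j\ge 2$; provided $F(\beta)$ is \emph{complete}, the absolute value extends uniquely to its finite extensions, so $\tau$ is an isometry, and then $|\alpha_j-\beta|=|\tau(\alpha-\beta)|=|\alpha-\beta|$, contradicting the hypothesis. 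Two implicit conditions surface in this cleaned-up version: $F$ must be complete, and $\beta$ must be algebraic over $F$ (so that $F(\beta)$ is again complete). Neither is stated in the lemma as written, but both hold in the paper's application, where $F=\F_q((T^{-1}))$ and $\beta=\xi\in F$. So your route is the right one; you should just run the isometry argument on $\overline F$ (via $F(\beta)$-embeddings of the finite extension $F(\beta)(\alpha)$) rather than on all of $K$, and note the completeness and algebraicity needed to invoke uniqueness.
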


\begin{proof}
See e.g. \cite[Section 3.4.2]{BGR84}. 
\end{proof}



\begin{lemma} \label{Gu2}
Let $P (X)$ be a polynomial in $C_{\infty} [X]$ of degree $n \ge 1$ and with leading coefficient $a_n$.
Let $\beta_1, \ldots , \beta_n$ be its roots in $C_{\infty}$. 
Then, for any $\rho > 0$ and any $\xi$ in $C_{\infty}$, we have
$$
\prod_{i=1}^n \max\{|\xi - \beta_i|, \rho\} \ll \gg \frac{H(P)}{|a_n|}
$$
and
$$
\prod_{i=1}^n \min \{|\xi - \beta_i|, \rho\} \ll \gg \frac{|P(\xi)|}{H(P)}. 
$$
\end{lemma}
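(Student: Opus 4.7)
The plan is to reduce both inequalities to the non-archimedean analogue of the Mahler measure identity, namely
\[
\frac{H(P)}{|a_n|} = \prod_{i=1}^n \max\{1,|\beta_i|\}.
\]
This identity is exact in the ultrametric setting: writing $P(X)/a_n = \prod_i (X-\beta_i) = \sum_k (-1)^k e_k X^{n-k}$, Vieta's formulas together with the ultrametric inequality give $|e_k| \le \max_{|I|=k} \prod_{i\in I}|\beta_i|$, and one checks directly that $\max_k |e_k| = \prod_i \max\{1,|\beta_i|\}$. Since $H(P)/|a_n|=\max_k|e_k|$, the identity follows. I would record this as a preliminary step.

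For the first assertion, the plan is to compare $\max\{|\xi-\beta_i|,\rho\}$ with $\max\{1,|\beta_i|\}$ one factor at a time, with an implicit constant depending only on $\xi$ and $\rho$. By the ultrametric inequality $|\xi-\beta_i|\le\max\{|\xi|,|\beta_i|\}$ and $|\beta_i|\le\max\{|\xi|,|\xi-\beta_i|\}$. Setting $M=\max\{1,|\xi|,\rho\}$, a short case analysis shows that
\[
M^{-1}\,\max\{1,|\beta_i|\}\;\le\;\max\{|\xi-\beta_i|,\rho\}\;\le\;M\,\max\{1,|\beta_i|\}.
\]
Taking the product over $i=1,\dots,n$ and invoking the Mahler-measure identity yields the claimed $\ll\gg$-equivalence with $H(P)/|a_n|$, the constant depending on $n,\xi,\rho$.

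For the second assertion, I would exploit the elementary identity $\max(a,b)\min(a,b)=ab$ factor by factor:
\[
\prod_{i=1}^n \max\{|\xi-\beta_i|,\rho\} \cdot \prod_{i=1}^n \min\{|\xi-\beta_i|,\rho\} \;=\; \rho^n \prod_{i=1}^n |\xi-\beta_i| \;=\; \rho^n\,\frac{|P(\xi)|}{|a_n|}.
\]
Dividing by the estimate already obtained in the first step gives
\[
\prod_{i=1}^n \min\{|\xi-\beta_i|,\rho\} \;\asymp\; \rho^n\,\frac{|P(\xi)|/|a_n|}{H(P)/|a_n|} \;=\; \rho^n\,\frac{|P(\xi)|}{H(P)},
\]
which is the second inequality (again with constants depending on $n,\xi,\rho$).

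The whole argument is essentially bookkeeping, and the main obstacle, such as it is, lies only in the first step: one must verify that the factor-by-factor comparison $\max\{|\xi-\beta_i|,\rho\}\asymp\max\{1,|\beta_i|\}$ holds with a constant independent of $\beta_i$. The ultrametric nature of $|\cdot|$ on $C_\infty$ (in particular the identity $|x+y|=\max\{|x|,|y|\}$ when $|x|\ne|y|$) makes this comparison very clean, so there is no real analytic obstacle here; once the first inequality is in hand, the max/min identity reduces the second inequality to a one-line manipulation.
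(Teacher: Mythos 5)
The paper itself gives no proof of Lemma~\ref{Gu2} — it simply cites Guntermann \cite{Gu96} — so I am evaluating your argument on its own merits. Your overall strategy is sound and is essentially the classical one (used already by Wirsing, and adapted by Guntermann to the power series setting): reduce everything to the ultrametric Mahler–measure identity $H(P)/|a_n| = \prod_i \max\{1,|\beta_i|\}$, compare factor by factor with $\max\{|\xi - \beta_i|, \rho\}$, and deduce the second estimate from the first via $\max(a,b)\min(a,b) = ab$. All of that is the right plan, and the $\rho^n$ that appears in your final display is harmless since the implicit constants must in any case depend on $n$, $\rho$ and $\xi$.

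However, there is a slip in your factor-by-factor lower bound. With $M = \max\{1, |\xi|, \rho\}$ the inequality
$$
M^{-1}\max\{1,|\beta_i|\} \le \max\{|\xi-\beta_i|,\rho\}
$$
is false whenever $\rho < 1$: take $|\xi| = 1$, $\beta_i = \xi$, $\rho = q^{-1}$, so that $M = 1$, the left side equals $1$, but the right side equals $\rho < 1$. The problem is that when $|\beta_i|$ is bounded (say $|\beta_i| \le \max\{1,|\xi|\}$), the only lower bound available for $\max\{|\xi - \beta_i|, \rho\}$ is $\rho$ itself, and $\rho$ is not bounded below by $M^{-1}$. The repair is easy: replacing $M$ by, say, $\max\{1,|\xi|\}\cdot\max\{\rho, \rho^{-1}\}$ gives both inequalities. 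Indeed, for $|\beta_i| > \max\{1,|\xi|\}$ the ultrametric triangle inequality forces $|\xi-\beta_i| = |\beta_i| = \max\{1,|\beta_i|\}$ and there is nothing to prove; and for $|\beta_i| \le \max\{1,|\xi|\}$ one uses $\max\{|\xi-\beta_i|,\rho\} \ge \rho \ge \min\{1,\rho\}$ together with $\max\{1,|\beta_i|\} \le \max\{1,|\xi|\}$. With this corrected constant, the rest of your argument goes through unchanged.

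One further small caution: the paper's blanket convention is that $\ll$, $\gg$ denote \emph{absolute} constants, but as your own proof makes clear (and as the $n=1$ case $P(X)=X$, $\xi=0$, $\rho\to 0$ shows) the constants here genuinely depend on $n$, $\rho$ and $\xi$. You are right to flag that dependence explicitly; just be aware that you are silently overriding the paper's stated convention, as the paper itself implicitly does when applying the lemma.
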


\begin{proof}
See \cite{Gu96}. 
\end{proof}

\begin{lemma}
\label{lem:estimate}
Let $P(X) = c_m(T) X^m + \ldots + c_1(T) X + c_0 (T)$ be a polynomial in $\F_q[T][X]$ of 
positive degree. Let $\alpha_1,\ldots , \alpha_m$ be its roots in $C_\infty$. 
Let $\xi$ be in $\F_q ((T^{-1}))$. 
Then, for any nonempty subset $S$ of $\{1, \ldots , m\}$, we have 
$$
|c_m(T) \prod_{i\in S} (\xi-\alpha_i)| \le (\max(1,|\xi|))^m H(P).   
$$
\end{lemma}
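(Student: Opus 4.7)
The plan is to combine the ultrametric triangle inequality with the standard multiplicativity of the Gauss norm for polynomials over a non-Archimedean field. First I would apply the ultrametric bound termwise: for each $i$,
$$|\xi - \alpha_i| \le \max(|\xi|, |\alpha_i|) \le \max(1,|\xi|) \cdot \max(1,|\alpha_i|).$$
Multiplying over $i \in S$ and inserting $|c_m(T)|$ yields
$$\Bigl| c_m(T) \prod_{i \in S} (\xi - \alpha_i) \Bigr| \le (\max(1,|\xi|))^{|S|} \cdot |c_m(T)| \prod_{i \in S} \max(1,|\alpha_i|).$$

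Next I would invoke the Gauss-norm identity on $C_\infty[X]$. Because the absolute value on $C_\infty$ is non-Archimedean, the Gauss norm (the maximum of the absolute values of the coefficients) is multiplicative under polynomial multiplication, and for a linear factor one has $\|X-\alpha\|_{\mathrm{Gauss}} = \max(1,|\alpha|)$. Applied to the factorization $P(X) = c_m(T) \prod_{i=1}^m (X - \alpha_i)$, this gives the clean formula
$$H(P) = |c_m(T)| \prod_{i=1}^m \max(1,|\alpha_i|).$$
(This is essentially the $\rho = +\infty$ case of Lemma \ref{Gu2}, and is standard; it could alternatively be read off from the expression of the coefficients of $P$ as elementary symmetric polynomials in the $\alpha_i$, combined with the ultrametric inequality.)

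Finally, since each factor $\max(1,|\alpha_j|) \ge 1$, dropping the factors with $j \notin S$ can only decrease the right-hand side, so
$$|c_m(T)| \prod_{i \in S} \max(1,|\alpha_i|) \le H(P),$$
and since $|S| \le m$ and $\max(1,|\xi|) \ge 1$, the exponent $|S|$ can be enlarged to $m$. Combining these inequalities yields the claimed bound. The only step worth a second thought is the Gauss-norm identity for $H(P)$; once that is accepted (it rests solely on the ultrametric property of $|\cdot|$ on $C_\infty$), everything else is an immediate application of the non-Archimedean triangle inequality.
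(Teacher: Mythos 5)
Your proof is correct, and it takes a genuinely different (and arguably cleaner) route than the paper's. The paper bounds each factor $|\xi-\alpha_i|$ by $|\alpha_i|$ or $|\xi|$ according to which is larger, reduces to the quantity $|c_m(T)\alpha_1\cdots\alpha_s|$ where $\alpha_1,\ldots,\alpha_s$ are precisely the roots of absolute value $\ge 1$, and then identifies this with $|c_{m-s}(T)| \le H(P)$ by observing that the $s$-fold product of the large roots strictly dominates every other $s$-fold product in the elementary symmetric polynomial $e_s$. You instead bound each $|\xi-\alpha_i|$ by $\max(1,|\xi|)\max(1,|\alpha_i|)$ and appeal directly to the non-Archimedean Gauss-norm identity $H(P)=|c_m(T)|\prod_{i=1}^m\max(1,|\alpha_i|)$, then drop the factors indexed outside $S$. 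Both are sound, but your route localizes the bound to a single standard fact and avoids the somewhat delicate partition into the auxiliary set $S_0$ and the argument about which symmetric-function term is dominant. One small caveat: the Gauss-norm multiplicativity you invoke is an exact identity (a consequence of the non-Archimedean Gauss lemma over $C_\infty$), and you should cite it as such rather than deriving it from Lemma~\ref{Gu2}, which as stated only gives two-sided bounds with unspecified implicit constants ($\ll\gg$) and would not by itself yield the clean inequality with no constant; the exact version is what the paper's own argument implicitly reproves in the special case of the coefficient $c_{m-s}$.
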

\begin{proof}
We may assume without loss of generality that $\alpha_1,\ldots ,\alpha_s$ 
are the zeros of $P(X)$ with $|\alpha_i|\ge 1$ and that $|\alpha_j|<1$ for $j>s$.
Let $S_0$ be the set of $i$ in $S$ 
with $|\alpha_i| > |\xi|$.    
Then 
$$
|c_m(T) \prod_{i\in S} ( \xi-\alpha_i)| 
\le |c_m(T)|\prod_{i\in S_0} |\alpha_i|  |\xi|^{|S|-|S_0|}\le (\max(1,|\xi|))^m |c_m(T)\alpha_1\cdots \alpha_s|.
$$
Now
by construction $|\alpha_1\cdots \alpha_s| > |\alpha_{i_1}\cdots \alpha_{i_s}|$ whenever 
$i_1<i_2<\cdots <i_s$ and $i_s\neq s$.  Thus, 
$$
|c_m(T)\alpha_1\cdots \alpha_s| = 
\Bigl|c_m(T) \sum_{i_1<\cdots <i_s} \alpha_{i_1}\cdots \alpha_{i_s} \Bigr| = |c_{m-s}(T)|,
$$ 
and so the result follows.
\end{proof}

\section{Proof of Theorem \ref{invsep}} 

We establish, in this order, that the exponents $w_n$ and $w_n^{{\rm sep}}$ coincide, that 
$w_n$ and $\hw_n$ are invariant under the map $\xi \mapsto \xi^p$, and, finally, 
that the exponents $\hw_n$ and $\hw_n^{{\rm sep}}$ coincide. 

A common key tool for the proofs given in this section is the notion of Cartier operator. 
For a positive integer $j$, let $\Lambda_0,\ldots \Lambda_{p^j-1}:{{\mathbb F}}_q((T^{-1}))\to {{\mathbb F}}_q((T^{-1}))$ 
be the operators uniquely defined by
$$
G(T^{-1}) = \sum_{i=0}^{p^j-1} T^{i} \Lambda_i(G(T^{-1}))^{p^j}, 
$$ 
for $G(T^{-1})$ in ${{\mathbb F}}_q((T^{-1}))$.
Observe that $\Lambda_i(A+B^{p^j} C) = \Lambda_i(A) + B\Lambda_i(C)$ 
for $A,B,C$ in ${{\mathbb F}}_q((T^{-1}))$.
Note also that, for $i=0, \ldots , p^j - 1$, we have
$$
\Lambda_i (T^{-p^j + i}) = T^{-1}, \quad \Lambda_i (T^{p^j + i}) = T.
$$

\begin{proof}[$\bullet$  Proof of the equality $\hw_n=\hw_n^{\rm sep}$.]   

The following lemma implies that the exponents $w_n$ and $w_n^{{\rm sep}}$ coincide.

\begin{lemma} \label{CartOp}
Let $n$ be a positive integer and $\xi$ in $\F_q ((T^{-1}))$. 
Let $w \ge 1$ be a real number and $P (X)$ be in ${{\mathbb F}}_q[T] [X]$ such that $0 < |P(\xi)| \le H(P)^{-w}$.
Then, there exists a separable polynomial $Q(X)$ in ${{\mathbb F}}_q[T] [X]$ such that 
$0 < |Q(\xi)| \le H(Q)^{-w}$. 
\end{lemma}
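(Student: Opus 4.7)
I will prove the lemma by induction on $n := \deg P$. The base case $n = 1$ is trivial since every polynomial of degree one in $\F_q[T][X]$ is separable. For the inductive step, assume the result holds for polynomials of degree less than $n$ and suppose $P$ of degree $n$ is not separable. I split the argument according to whether $P'(X) = 0$.

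If $P'(X) = 0$, then $P(X) = \tilde P(X^p)$ for some $\tilde P(Y) = \sum_k c_k(T) Y^k \in \F_q[T][Y]$ of degree $n/p$. Decomposing each coefficient via the Cartier identity $c_k(T) = \sum_{r=0}^{p-1} T^r d_{r,k}(T)^p$ (valid since $\F_q$ is perfect, with $d_{r,k} \in \F_q[T]$) and using $\xi^{pk} = (\xi^k)^p$, I would obtain
\[
P(\xi) \;=\; \tilde P(\xi^p) \;=\; \sum_{r=0}^{p-1} T^r Q_r(\xi)^p, \quad Q_r(X) := \sum_k d_{r,k}(T) X^k \in \F_q[T][X],
\]
with $\deg Q_r \le n/p$ and $H(Q_r) \le H(P)^{1/p}$ (since $|d_{r,k}|^p \le q^{-r} |c_k| \le H(P)$). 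The summands $T^r Q_r(\xi)^p$ have $T^{-1}$-adic valuations in distinct residue classes modulo $p$, so the ultrametric inequality becomes an equality and $|P(\xi)| = \max_r q^r |Q_r(\xi)|^p$. Hence for each $r$ with $Q_r(\xi) \neq 0$ (at least one exists because $P(\xi) \neq 0$), $|Q_r(\xi)| \le H(P)^{-w/p} \le H(Q_r)^{-w}$, and $\deg Q_r < n$, so the induction hypothesis supplies the required separable polynomial.

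If $P'(X) \neq 0$, factor $P = B(X) \cdot G(X)$ by grouping the irreducible factors of $P$ into $B$ (those that are either inseparable or appear in $P$ with multiplicity divisible by $p$) and $G$ (the remaining separable irreducible factors with their multiplicities, coprime to $p$). Each factor in $B$ is a polynomial in $X^p$: an inseparable irreducible $Q_i$ satisfies $Q_i(X) = \tilde Q_i(X^{p^{f_i}})$ for some $f_i \ge 1$, and a separable irreducible $Q_i$ with $p \mid e_i$ satisfies $Q_i^{e_i} = (Q_i^{e_i/p})^p$, which, by the Frobenius identity $(\sum a_j X^j)^p = \sum a_j^p X^{jp}$, is a polynomial in $X^p$. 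Therefore $B(X) = \tilde B(X^p)$ for some $\tilde B \in \F_q[T][Y]$. By Gauss's lemma the Gauss norm on $\F_q[T][X]$ is multiplicative, so $H(P) = H(B)\,H(G) = H(\tilde B)\,H(G)$, and $|P(\xi)| = |\tilde B(\xi^p)|\,|G(\xi)| \le H(\tilde B)^{-w}\,H(G)^{-w}$ forces the dichotomy: either $|\tilde B(\xi^p)| \le H(\tilde B)^{-w}$, or $|G(\xi)| \le H(G)^{-w}$.

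In the first branch, the Cartier argument above applied to $\tilde B$ at $Y = \xi^p$ yields a polynomial of degree $\le \deg \tilde B < n$ satisfying the analogous inequality, and induction closes. In the second, writing $G = \prod_i Q_i^{e_i}$ with the $Q_i$ separable irreducible and $p \nmid e_i$, Gauss's lemma gives $H(G) = \prod H(Q_i)^{e_i}$, so $|G(\xi)| \le H(G)^{-w}$ rearranges to $\prod_i (|Q_i(\xi)|\,H(Q_i)^w)^{e_i} \le 1$; since each $e_i \ge 1$, at least one factor satisfies $|Q_i(\xi)|\,H(Q_i)^w \le 1$, and $Q := Q_i$ is a separable polynomial with $|Q(\xi)| \le H(Q)^{-w}$. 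The hardest part will be establishing this clean dichotomy, which rests on Gauss's lemma together with the careful definition of $B$ and $G$ ensuring that $G$ contains no inseparable irreducible factor; once this is in place, the induction runs smoothly.
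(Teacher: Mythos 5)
Your proof is correct and takes a genuinely different route from the paper. The paper's proof is a single loop: it sets $d$ to be the gcd of the indices $i$ with $Q_i\neq 0$, takes $j$ to be the $p$-adic valuation of $d$, asserts that $j=0$ forces $P$ to be separable, and for $j\ge 1$ applies the $p^j$-level Cartier operators in one shot, iterating until it arrives at something separable. You instead induct on $\deg P$, handle the case $P'=0$ by a one-level ($p$, not $p^j$) Cartier step, and, crucially, when $P'\neq 0$ you factor $P=B\,G$ and invoke Gauss's lemma ($H(BG)=H(B)H(G)$ for the $T^{-1}$-adic Gauss norm) to split the inequality $|P(\xi)|\le H(P)^{-w}$ between the ``inseparable or $p$-divisible multiplicity'' part $B$ and the ``separable with $p$-coprime multiplicity'' part $G$; in the $G$-branch you extract a separable irreducible factor $Q_i$ directly, again via multiplicativity of the height. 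This extra ingredient actually buys you something: the paper's assertion that $j=0$ implies $P$ separable is not true for reducible $P$ --- for instance $P=(X^p-T)(X-T^2)$ has $j=0$ but an inseparable irreducible factor, and $P=R^2$ with $R$ separable irreducible and $p\nmid 2$ has $j=0$ but a repeated root --- and these are precisely the cases your $B/G$ dichotomy disposes of. Two small points of polish: when $B$ is a nonzero constant both branches of your dichotomy hold trivially and you should take the $G$-branch (which is nontrivial since $G$ is nonconstant whenever $P'\neq 0$); and you may as well start the induction at $\deg P=0$ so that a Cartier step landing on a constant is covered by the base case.
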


\begin{proof} 
We start with a polynomial $P(X)$ in ${{\mathbb F}}_q [T] [X]$ 
of degree at most $n$ with 
$$
0< |P(\xi)| = H(P)^{-w}.
$$
Write $P (X) = \sum_{i=0}^n Q_{i}(T) X^i$.  
Let $d$ denote the greatest common divisor of all $i$ such that $Q_{i}$ is nonzero.   
Let $j$ be the nonnegative integer such that $p^j$ divides $d$ but $p^{j+1}$ does not. 
If $j=0$, then $P(X)$ is separable, so we can assume $j \ge 1$. 
Thus we may write
$$
P (X) = \sum_{i\le n/p^j} Q_{p^j i}(T) X^{p^j i}.
$$  
Then, 
$$
P (\xi) = \sum_{i\le n/p^j} Q_{   p^j i}(T) \xi^{p^j i}, 
$$
so, for $0\le s<p^j$, we have
$$
\Lambda_s (P(\xi)) = \sum_{i\le n/p^j} \Lambda_s (Q_{p^j i}(T)) \xi^i.
$$  
Set $n'=\lfloor n/p^j \rfloor$.  
Notice that 
$$
G_{s}(X):=\sum_{i\le n'} \Lambda_s (Q_{p^j i}(T)) X^i
$$
is of degree at most $n'$ and $G_s(\xi) =\Lambda_s (P(\xi))$. 
Thus, 
$$
P(\xi) = \sum_{s=0}^{p^j-1} T^{s} G_{s}(\xi)^{p^j}.
$$ 
Since the $\nu(T^{s})$ are pairwise distinct mod $p^j$ for $s=0,\ldots ,p^j-1$, 
we see that the $\nu( T^{s} G_{s}(\xi)^{p^j})$ 
are pairwise distinct as $s$ ranges from $0$ to $p^j-1$.  
Let
$$
v = \min_{0 \le s \le p^j - 1} \{-s + p^j \nu(G_{s}(\xi))\}.
$$ 
Then
$\nu(G_{i}(\xi)) \ge v /p^j$ for $i=0,\ldots ,p^j-1$ and there exists one $s$ such that
$\nu(G_s(\xi)) = v/p^j +s/p^j$. 	
For this particular $s$ we must have
$$
0 < | G_{s}(\xi) | = q^{-v /p^j - s/p^j} \le q^{-v /p^j } = |P (\xi) |^{1/p^j}.  
$$
Also, by construction, if $B(T)$ is a polynomial of degree $\ell$, then $\Lambda_s (B)$ 
has degree at most $\ell/p^j$ and so $H(G_{s}) \le H(P)^{1/p^{j}}$.  Thus
$$
0 < | G_{s}(\xi) | =  | P_m (\xi) |^{1/p^j}  \le H(P)^{- w /p^j}  
\le H(G_{s})^{- w}.  
$$
If $G_s (X)$ is inseparable, then one repeats the argument to obtain a nonzero 
polynomial of lesser degree small at $\xi$. After at most $\log_p n$ steps we will get a separable polynomial. 
This proves the lemma. 
\end{proof}

\end{proof}

\begin{proof}[$\bullet$ Proof that $w_n$ is invariant under the map $\xi \mapsto \xi^p$.]  
Let $\xi$ and $n$ be as in the theorem. 
Let $P(X)$ be a polynomial of degree at most $n$ in $\F_q[T] [X]$ and define $w$ by 
$|P(\xi)| = H(P)^{-w}$. Write 
$$
P(X) = a_n(T) X^n + \ldots + a_1 (T) X + a_0 (T)
$$
and
$$
Q(X) = a_n(T^p) X^n + \ldots + a_1 (T^p) X + a_0 (T^p). 
$$
Then, we have $P(\xi)^p = Q(\xi^p)$, $H(Q) = H(P)^p$, and 
$$
|Q(\xi^p)|= H(P)^{-pw} = H(Q)^{-w}.
$$
This shows that $w_n(\xi^p) \ge w_n(\xi)$. 

The reverse inequality is more difficult and rests on Lemma \ref{CartOp}. 
Let $P(X)$ be a polynomial of degree at most $n$ in $\F_q[T] [X]$ 
which does not vanish at $\xi^p$, and define $w$ by 
$|P(\xi^p)| = H(P)^{-w}$. Write 
$$
P(X) = a_n(T) X^n + \ldots + a_1 (T) X + a_0 (T)
$$
and
$$
Q(X) = a_n(T) X^{pn} + \ldots + a_1 (T) X^p + a_0 (T)
$$
Then, we have $P(\xi^p) = Q(\xi)$ and $|Q(\xi)| = H(Q)^{-w}$. 
Obviously, $Q(X)$ is not separable. It follows from Lemma \ref{CartOp} that there
exists a polynomial $R(X)$, of degree at most $n$,
such that 
$$
|R(\xi)| \ll H(R)^{-w}.
$$
This shows that $w_n(\xi) \ge w_n(\xi^p)$ and completes the proof of the theorem. 
\end{proof}

In the next proofs, we make use of the following convention.  
Given a nonzero polynomial $P(X)=a_0+a_1X+\cdots +a_m X^m$ in $\mathbb{F}_q[T][X]$
and $i=0, \ldots , p-1$, we let
$\Lambda_i (P)(X)$ denote the polynomial
$$
\sum_{j=0}^m \Lambda_i (a_j) X^j.
$$


\begin{proof}[$\bullet$  Proof that $\hw_n$ is invariant under the map $\xi \mapsto \xi^p$.] 
Let $\varepsilon>0$.  
By assumption, for any sufficiently large $H$, there is some polynomial 
$P(X)=a_0+a_1 X+ \cdots +a_m X^m$ of degree $m$ at most $n$ and height at most $H^{1/p}$ such that
$$
0<|P(\xi)|<H^{-\hw_n(\xi)/p+\varepsilon/p}.
$$  
Set $Q(X)=a_0^p+a_1^p X+\cdots + a_m^p X^m$.  
Then $Q(X)$ has degree at most $n$ and height at most $H$, and, by construction, it satisfies 
$$
0<|Q(\xi^p)|<H^{-\hw_n(\xi)+\varepsilon}.
$$ 
It follows that $\hw_n(\xi^p)\ge \hw_n(\xi) - \varepsilon$ for every $\varepsilon>0$ and so we get the inequality
$$
\hw_n(\xi^p)\ge \hw_n(\xi).
$$

We now show the reverse inequality.  
By assumption, for any sufficiently large $H$, there is some polynomial 
$Q(X)=a_0+a_1 X +\cdots + a_m X^m$ 
of degree $m$ at most $n$ and height at most $H^p$ such that
$$
0<|Q(\xi^p)| < (H^p)^{-\hw_n(\xi^p)+\varepsilon}.
$$  
Then for each $i$ in $\{0,\ldots ,p-1\}$ we define
$$
Q_i(X)=\sum_{j=0}^m \Lambda_i  (a_j) X^j.
$$
By construction, we have $H(Q_i) \le H$ for $i=0,\ldots , p-1$.
Also we have $Q_i(\xi) = \Lambda_i (Q)(\xi)$ for $i=0,\ldots , p-1$ and so 
$Q(\xi^p) = \sum_{j=0}^{p-1} T^j Q_i(\xi)^p$.  Since the valuations are distinct, we have
$$
|Q_i(\xi)|< H^{-\hw_n(\xi^p) +\varepsilon},
$$ 
for $i=0,\ldots , p-1$.  
Since $Q(X)$ is nonzero, there is some $k$ in $\{0,\ldots ,p-1\}$ such that $Q_k(\xi)\neq 0$ and we see
$$
0<|Q_k(\xi)|<H^{-\hw_n(\xi^p) +\varepsilon}.
$$
It follows that $\hw_n(\xi)\ge \hw_n(\xi^p)$, giving us the reverse inequality.
\end{proof}

The next lemma is used in the proof that the uniform 
exponents $\hw_n$ and $\hw_n^{\rm sep}$ coincide. 
We let $\log_p$ denote the logarithm in base $p$.

\begin{lemma} 
\label{lem:pr}
Let $\xi\in \mathbb{F}_p ((T^{-1}))$ and let $P(X)=c_0 + c_1 X+\cdots + c_n X^{n}\in \mathbb{F}_q[T][X]$ 
be a non-constant polynomial that is a product of irreducible inseparable polynomials such that $P(\xi)$ is nonzero.
Then, there exist an integer $r$ with $0 \le r \le \log_p(n)$ and a polynomial $P_0(X)$ such that the following hold:
\begin{enumerate}
\item $P_0(X)$ has a non-trivial separable factor;
\item $p^r {\rm deg}(P_0) \le {\rm deg}(P)$;
\item $0<|P_0(\xi)|^{p^r} < q^{p^r-1} |P(\xi)|$;
\item $H(P_0)^{p^r} \le H(P)$.
\end{enumerate}
\end{lemma}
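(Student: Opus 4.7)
My plan is to refine the Cartier-operator argument used in the proof of Lemma \ref{CartOp}: choose the level to be an optimal power $p^r$ and then select $P_0$ as a suitable $\F_q$-linear combination of the resulting polynomials, so as to force $P_0$ to have both a non-trivial separable factor and a nonzero value at $\xi$.

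Since each irreducible factor of $P$ is inseparable and hence, in characteristic $p$, of the shape $g(X^p)$, the polynomial $P$ itself lies in $\F_q[T][X^p]$. Let $r$ be the largest integer with $P(X) = \tilde R(X^{p^r})$ for some $\tilde R \in \F_q[T][X]$; by maximality, $\tilde R = \sum_j \tilde b_j(T) X^j$ has a nonzero coefficient $\tilde b_{j_0}$ at some exponent $j_0$ coprime to $p$, and $p^r \le \deg P \le n$ gives $r \le \log_p(n)$. Applying the level $p^r$ Cartier operators $\Lambda_0, \ldots, \Lambda_{p^r-1}$ to each coefficient of $\tilde R$ and setting
$$ G_i(X) := \sum_j \Lambda_i(\tilde b_j)(T)\, X^j \qquad (0 \le i \le p^r-1), $$
the identity $\tilde b_j = \sum_i T^i \Lambda_i(\tilde b_j)^{p^r}$ together with $(\sum a_\alpha)^{p^r} = \sum a_\alpha^{p^r}$ in characteristic $p$ yields
$$ P(X) = \sum_{i=0}^{p^r-1} T^i\, G_i(X)^{p^r}. $$
From $\deg \Lambda_i(a) \le \deg(a)/p^r$ one obtains $\deg G_i \le \deg P/p^r$ and $H(G_i) \le H(P)^{1/p^r}$. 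At $\xi$, the valuations $\nu(T^i G_i(\xi)^{p^r}) = i + p^r \nu(G_i(\xi))$ are pairwise distinct modulo $p^r$ and hence pairwise distinct, so the non-Archimedean inequality forces $|T^i G_i(\xi)^{p^r}| \le |P(\xi)|$, giving
$$ |G_i(\xi)|^{p^r} \le q^i\, |P(\xi)| \le q^{p^r-1}\, |P(\xi)| \qquad \text{for every } i.$$

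The crux is the choice of $P_0$. Let $W$ be the $\F_q$-linear span of $\{G_0, \ldots, G_{p^r - 1}\}$ inside $\F_q[T][X]$, and consider the two subspaces
$$ V_1 = \{w \in W : w \text{ is a polynomial in } X^p\}, \qquad V_2 = \{w \in W : w(\xi) = 0\}. $$
Since $\tilde b_{j_0} = \sum_i T^i \Lambda_i(\tilde b_{j_0})^{p^r}$ with $\tilde b_{j_0} \ne 0$, some $\Lambda_{i^*}(\tilde b_{j_0}) \ne 0$, whence $G_{i^*}$ has a nonzero monomial at $X^{j_0}$ with $p \nmid j_0$, so $G_{i^*} \notin V_1$ and $V_1 \subsetneq W$; similarly, $P(\xi) \ne 0$ forces some $G_i(\xi) \ne 0$, so $V_2 \subsetneq W$. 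Over $\F_q$, for any $d$-dimensional space $W$ the union of two proper linear subspaces has cardinality at most $2q^{d-1} - 1 < q^d$, so there exists $P_0 = \sum c_i G_i \in W \setminus (V_1 \cup V_2)$. By construction $P_0$ has a non-trivial separable factor and $P_0(\xi) \ne 0$, giving (1) and the positivity in (3); (2) follows from $\deg P_0 \le \max_i \deg G_i \le \deg P/p^r$; (4) from $H(P_0) \le \max_i |c_i| H(G_i) \le H(P)^{1/p^r}$ (using $|c_i| \le 1$); and the bound in (3) follows from the non-Archimedean inequality $|P_0(\xi)| \le \max_i |G_i(\xi)|$ combined with the uniform estimate above.

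The main obstacle, beyond the techniques already present in the proof of Lemma \ref{CartOp}, is to secure property (1) while simultaneously keeping $P_0(\xi) \ne 0$. The naive candidate $P_0 = G_{i_0}$, with $i_0$ the minimizer of $\nu(T^i G_i(\xi)^{p^r})$, automatically satisfies $G_{i_0}(\xi) \ne 0$ but may lie in $V_1$, while replacing it by some other individual $G_i \notin V_1$ may force $G_i(\xi) = 0$; the counting argument over $\F_q$ for the complement of a union of two proper linear subspaces is precisely what yields a linear combination $P_0 = \sum c_i G_i$ satisfying both conditions at once.
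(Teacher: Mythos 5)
Your proof is correct and takes a genuinely different route from the paper's. The paper's argument is a recursive descent at level $p$: it applies the Cartier decomposition once, writes $P(X)=Q(X^p)$, selects the single index $j_0$ that realizes the minimum of $\nu\bigl(T^{j}(\Lambda_{j}(Q)(\xi))^p\bigr)$ so that $A=\Lambda_{j_0}(Q)$ is automatically nonzero at $\xi$, and then invokes the lemma inductively on $A$ (via a ``smallest bad $n$'' argument) until a separable factor appears. You instead jump directly to the maximal level $p^r$ with $P(X)=\tilde R(X^{p^r})$ and confront the real tension head-on: the index $i$ making $G_i\notin\F_q[T][X^p]$ need not be the index making $G_i(\xi)\neq 0$. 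Your union-of-two-proper-subspaces argument over $\F_q$ (with $V_1$ the span elements lying in $X^p$ and $V_2$ those vanishing at $\xi$) neatly produces a single linear combination $P_0=\sum c_iG_i$ satisfying both conditions at once, and the bounds on $\deg$, $H$, and $|\cdot|$ pass through linear combinations because $|c_i|\le 1$. The trade-off: the paper's route needs no extra lemma but requires a recursion and an implicit case split (whether $A$ already has a separable factor); yours is a clean one-shot construction at the cost of the elementary subspace-avoidance fact. One small slip to fix: since $\nu(T^i)=-i$ in this paper's convention, you have $\nu(T^iG_i(\xi)^{p^r})=-i+p^r\nu(G_i(\xi))$, and the bound $|T^iG_i(\xi)^{p^r}|\le|P(\xi)|$ gives $|G_i(\xi)|^{p^r}\le q^{-i}|P(\xi)|\le|P(\xi)|$, not $\le q^{i}|P(\xi)|$; your written chain $\le q^{i}|P(\xi)|\le q^{p^r-1}|P(\xi)|$ only yields a non-strict inequality at $i=p^r-1$, whereas the corrected (and stronger) bound $|P_0(\xi)|^{p^r}\le|P(\xi)|<q^{p^r-1}|P(\xi)|$ delivers the required strictness since $r\ge 1$.
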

\begin{proof}
Suppose that this is not the case.  Then there must be some smallest $n$ for which it is not true. 
Then since $P(X)$ is a product of irreducible inseparable polynomials $n=pm$ for some $m$.  
Then $P(X) = Q(X^p)$ for some polynomial $Q$ of degree $m$.  Observe that 
$$
P(\xi) = \sum_{j=0}^{p-1} T^j (\Lambda_j (Q)(\xi))^p. 
$$ 
For $j = 0, \ldots , p-1$ such that $\Lambda_j(Q)(\xi)$ is nonzero, write
$$
\hbox{$\Lambda_j (Q)(\xi) = c_j T^{-a_j} +$ larger powers of $T^{-1}$},
$$
with $c_j\neq 0$. 
Then, we have
$$ 
\hbox{$T^j (\Lambda_j (Q)(\xi))^p = c_j^p T^{-pa_j+j} +$ larger powers of $T^{-1}$}.
$$
Now there must be some unique $j_0$ such that $pa_{j_0}-j_0$ 
is minimal among all $pa_j-j$ (and it must be finite), thus
$$
|P(\xi)| = q^{-(pa_{j_0}-j_0)}.
$$
Then, the polynomial $A(X):= \Lambda_{j_0} (Q)$ has the property that
$$
|A(\xi)| = q^{-a_{j_0}}.
$$
To summarize, we have:

\begin{enumerate}
\item[(a)] $p{\rm deg}(A) \le {\rm deg}(P)$;
\item[(b)] $0<|A(\xi)|^{p} \le  |P(\xi)|$;
\item[(c)] $H(A)^{p} \le H(P)$.
\end{enumerate}
By construction, ${\rm deg}(A)<n$ and so by minimality, there is some $r\le \log_p(m)$ 
and a polynomial $Q(X)$ such that:
\begin{enumerate}
\item[(d)] $p^r {\rm deg}(Q) \le {\rm deg}(A)$;
\item[(e)] $0<|Q(\xi)|^{p^r} \le |A(\xi)|$;
\item[(f)] $H(Q)^{p^r} \le H(A)$;
\item[(g)] $Q$ has a non-trivial separable factor.
\end{enumerate}
Then by construction
$p^{r+1}{\rm deg Q}\le {\rm deg}(P)$, $0<|Q(\xi)|^{p^{r+1}} < q^{p^{r+1}-1} |P(\xi)|$ 
and $H(Q)^{p^{r+1}}\le H(P)$.  Furthermore, $r+1\le 1+\log_p(m)\le \log_p(n)$ and so we get the desired result.
\end{proof}

\begin{proof}[$\bullet$  Proof of the equality $\hw_n=\hw_n^{\rm sep}$.] 
It is clear that $\hw_n(\xi)\ge \hw_n^{\rm sep}(\xi)$.  
We now show the reverse inequality.  Let $\varepsilon>0$.  Then there is some $H_0$ 
such that for every $H>H_0$ there is a polynomial $P(X)$ of degree at most $n$ and height at most $H$ such that
$$
0<|P(\xi)|<H^{-\hw_n(\xi) +\varepsilon}.
$$
We take the infimum over all $d\le n$ for which there is some positive constant $C$ 
such that for every $H>H_0$ there is a polynomial $A(X)B(X)$ with $A(X)$ separable 
and $B(X)$ a polynomial of degree at most $d$ that is a product of irreducible inseparable polynomials with
$$
0<|A(\xi)B(\xi)| < C\cdot H^{-\hw_n(\xi) +\varepsilon}.
$$
Then by assumption, $d$ must be positive and since the polynomial $B(X)$ 
is a product of inseparable irreducible polynomials, we see that $p$ divides $d$.  

Let $H>H_0$.  Then there is a fixed constant $C>0$ that does not depend on $H$ 
such that there are polynomials $A(X)$ and $B(X)$ with $A$ separable and $B$ 
a polynomial of degree at most $d$ that is a product of irreducible separable polynomials with 
$$
0<|A(\xi)B(\xi)| < C\cdot H^{-\hw_n(\xi) +\varepsilon}.
$$  
Then by Lemma \ref{lem:pr} there is some $r\le \log_p(n)$ 
and a polynomial $B_0(X)$ with a non-trivial separable factor such that 
${\rm deg}(B)=p^r {\rm deg}(B_0)$ and
$$
0<|B_0(\xi)| \le  |B(\xi)|
$$ 
and $H(B_0^{p^r})<H(B)$.  Thus, the polynomial 
$$
A(X)B_0(X)^{p^r}
$$ 
has degree at most $n$ and height at most $H$ and 
$$
0<|A(\xi)B_0(\xi)^{p^r}| < C q^{p^r-1} H^{-\hw_n(\xi) +\varepsilon}.
$$ 
By assumption, we can write $B_0(X)=C(X)D(X)$ with $C(X)$ non-constant and separable 
and $D(X)$ a product of irreducible inseparable polynomials.
Then we have 
$$
{\rm deg}(D(X)^{p^r})\le {\rm deg}(B)-p^r <d.
$$  
But this contradicts the minimality of $d$ and so we see that $d$ must be zero and so we get
$\hw_n^{\rm sep}(\xi)\ge \hw_n(\xi)-\varepsilon$.  Since $\varepsilon>0$ is arbitrary, we get the desired result.
\end{proof}

\begin{proof}[Proof of Proposition \ref{lahla}]
Observe that, for $j \ge 1$, 
the equality $(R(T) \xi^j)^p = R(T^p)\xi^{p j}$ immediately yields $\lambda_n (\xi^p)\ge \lambda_n(\xi)$
and $\hla_n (\xi^p)\ge \hla_n(\xi)$, for $n \ge 1$. 

Take $\lambda$ with $0 < \lambda < \lambda_n(\xi^p)$. 
Then, there is an infinite set $\mathcal{S}$ of polynomials $R(T)$ such that
$$
0 < \max\{ \Vert R(T) \xi^p \Vert, \ldots , \Vert R(T) \xi^{pn} \Vert \} < q^{-\lambda  \deg(R)}.
$$
By replacing $\mathcal{S}$ with a well-chosen infinite subset, 
we may assume that there is a fixed $j$ in $\{0,1,\ldots, p-1\}$ 
such that the degree of every polynomial in $\mathcal{S}$ is congruent to $j$ modulo $p$.  
Then for $R(T)$ in $\mathcal{S}$ and $i$ in $\{1,\ldots ,n\}$, we apply the $j$-th Cartier operator $\Lambda_j $ to 
$R(T)\xi^{pi}$ and we have $|\Lambda_j  (R(T))\xi^{i}| < q^{-\lambda \deg(R)/p}$
We let $Q(T)=\Lambda_j  (R(T))$.  Then the degree of $Q$ is $(\deg(R)-j)/p$ and so we see
$$
\left| Q(T) \xi^i\right| < q^{-\lambda (p\deg(Q) +j)/p} \le q^{-\lambda \deg(Q)}, 
$$ 
for $i=1,\ldots ,n$.  Since the degrees of the elements of 
$\Lambda_j  (\mathcal{S})$ are arbitrarily large, we deduce that $\lambda_n(\xi) \ge \lambda$.
Consequently, we have established that $\lambda_n(\xi) \ge \lambda_n (\xi^p)$.

Take $\hla$ with $0 < \hla < \hla_n(\xi^p)$. 
For any sufficiently large integer $d$, there is a polynomial $R(T)$ of degree at most $p d$ such that
$$
0 < \max\{ \Vert R(T) \xi^p \Vert, \ldots , \Vert R(T) \xi^{pn} \Vert \} < q^{-\hla  p d}.
$$
Let $j$ be in $\{0,1,\ldots, p-1\}$ 
such that the degree of $R(T)$ is congruent to $j$ modulo $p$.  Apply the $j$-th Cartier operator to 
$R(T)\xi^{pi}$ and let $Q(T)=\Lambda_j  (R(T))$.  Then the degree of $Q$ is at most equal to $d$ and so we see
$$
\left| Q(T) \xi^i\right| < q^{- \hla  p d /p} \le q^{- \hla d}, 
$$ 
for $i=1,\ldots ,n$. This shows that $\hla_n(\xi) \ge \hla$. Thus, we obtain $\hla_n (\xi) \ge \hla_n (\xi^p)$.
\end{proof}

\section{Proofs of Theorems \ref{Th:2.0}, \ref{Th:wineq}, \ref{Th:powerp}, and \ref{WirsUnif}}

By adapting the proof of Wirsing \cite{Wir61} to the 
power series setting, Guntermann \cite[Satz 1]{Gu96} established that, for every $n \ge 1$
and every $\xi$ in $\F_q ((T^{-1}))$ not algebraic of degree $\le n$, we have
$$
w_n^@ (\xi) \ge \frac{n+1}{2}.
$$
Actually, it is easily seen that instead of starting her proof with polynomials given by Mahler's analogue \cite{Mah41,Spr69}
of Minkowski's theorem, she could have, like Wirsing, started with polynomials $P[X]$ satisfying
$$
0 < |P(\xi)| < H(P)^{-w_n (\xi) + \eps},
$$
where $\eps$ is an arbitrarily small positive real number. By doing this, one gets the stronger assertion
\begin{equation} \label{GuW}
w_n^@ (\xi) \ge \frac{w_n (\xi) +1}{2},
\end{equation}
which is crucial for proving Theorem \ref{Th:2.0}. 
Note that Guntermann \cite{Gu96} did not obtain any lower bound for $w_n^* (\xi)$, except when $n=2$.

\begin{proof}[Proof of Theorem \ref{Th:2.0}]
Set $w = w_n (\xi)$, $w^@ = w_n^@ (\xi)$, and $w^* = w_n^* (\xi)$. 
Suppose that $w^@  > w^*$ and pick $\varepsilon$ in $(0, 1/3)$ such that $w^@ > w^* + 2\varepsilon$.  
Then, there are infinitely many $\alpha$  in $C_{\infty}$ algebraic of degree at most $n$ such that
$$
|\xi - \alpha| < H(\alpha)^{-1-w^@ + \varepsilon}.
$$
Let $P_\alpha (X)$ denote the minimal polynomial of $\alpha$ over $\F_q[T]$.  Then $H(P_\alpha) = H(\alpha)$.  
We let $\alpha=\alpha_1,\ldots ,\alpha_m$ denote the roots of $P_\alpha(X)$ 
(with multiplicities), where $m={\rm deg}(P_\alpha)\le n$. 
We may assume that $|\xi-\alpha_1|\le \cdots \le |\xi-\alpha_m|$.  Let $r$ be the largest integer such that 
$$
|\xi-\alpha_1|=\cdots = |\xi-\alpha_r|. 
$$

If $r=1$ for infinitely many $\alpha$ as above, then $P_\alpha(X)$ is separable over $\mathbb{F}_q((T))$,  
and we conclude from Krasner's Lemma \ref{Kras} that $\alpha_1$ lies in $\mathbb{F}_q((T^{-1}))$. 
For $H(\alpha)$ large enough, we then get
$$
H(\alpha)^{-1-w_n^*-\varepsilon} < |\xi - \alpha| < H(\alpha)^{-1-w^@ + \varepsilon}, 
$$
thus $w^@ \le w^* + 2\varepsilon$, a contradiction. 

Thus, we have $r \ge 2$.
Observe that
$|P_\alpha(\xi)| > H(\alpha)^{-w -\varepsilon}$ if $H(P_\alpha)$ is large enough.  
On the other hand, with $c_\alpha (T)$ being the leading coefficient of $P_\alpha (X)$, we get 
\begin{align*} 
|P_\alpha(\xi)| &= \Bigl| c_\alpha(T) (\xi-\alpha_1)\cdots (\xi -\alpha_r) \prod_{i=r+1}^m (\xi-\alpha_i) \Bigr| \\
& =  | \xi-\alpha|^r  \cdot \Bigl|c_\alpha(T) \prod_{i=r+1}^{m} (\xi-\alpha_i) \Bigr| \\
&< (\max\{1,|\xi|\})^n \cdot H(\alpha)^{1-r (1+w^@-\varepsilon)},
\end{align*}
where the last step follows from Lemma \ref{lem:estimate}.

By \eqref{GuW} we have $w^@ \ge (w+1)/2$, thus we get 
$$
H(\alpha)^{-w -\varepsilon} \ll |P_\alpha(\xi)| \ll H(\alpha)^{1-r (1+w^@-\varepsilon)} 
\ll H(\alpha)^{1-r(1+(w+1)/2)  + r\varepsilon}.
$$
This then gives 
$$
w +\varepsilon \ge -1 + r+ \frac{r(w+1)}{2} - r\varepsilon,
$$ 
and since $r\ge 2$ we deduce 
$$
w +\varepsilon \ge - 1+w +1 +r - r\varepsilon,
$$
which is absurd. Since $\eps$ can be taken arbitrarily small, we deduce that $w_n^@ (\xi) \le w_n^* (\xi)$. 
As the reverse inequality immediately follows from the definitions of $w_n^@$ and $w_n^*$, the proof 
is complete. 
\end{proof}

We are ready to complete the proof of Theorem \ref{Th:wineq}. 

\begin{proof}[Proof of Theorem \ref{Th:wineq}]
Let $\xi$ be in $\F_q ((T^{-1}))$ and $n$ be a positive integer. The 
inequality $w_n^* (\xi) \le w_n (\xi)$ is clear. 
Let $\eps$ be a positive real number. By Lemma \ref{CartOp}, there exist separable 
polynomials $P(X)$ in $\F_q[T] [X]$ of arbitrarily large height such that
$$
0 < |P(\xi)| < H(P)^{- w_n (\xi) +\eps}.
$$
Then, the (classical) argument given at the beginning 
of the proof of \cite[Lemma 5.4]{Oo17} yields the existence of a root $\alpha$ of $P(X)$ such that 
$$
0 < |\xi - \alpha| \le |P(\xi)| \, H(P)^{n - 2}. 
$$
Thus, we get the inequality
$$
w_n^@ (\xi) \ge w_n (\xi) - n + 1,
$$
and we conclude by applying Theorem \ref{Th:2.0} which asserts that $w_n^@ (\xi) = w_n^* (\xi)$. 
\end{proof}

\begin{proof}[Proof of Theorem \ref{Th:powerp}]
Let $\xi$ and $n$ be as in the theorem. 
In view of Theorem \ref{invsep}, it only remains for us to prove that $w_n^* (\xi) = w_n^* (\xi^p)$. 
Let $\alpha$ be in $\F_q((T^{-1}))$ algebraic of degree at most $n$ and define $w$
by $|\xi - \alpha| = H(\alpha)^{-w-1}$. Then, we have
$$
|\xi^p - \alpha^p| = H(\alpha)^{-p(w+1)} \quad \hbox{and} \quad H(\alpha^p) \le H(\alpha).
$$
Consequently, we get $w_n^* (\xi^p) \ge w_n^* (\xi)$. 

Now, we prove the reverse inequality. Set $w = w_n^* (\xi^p)$. 
Let $\eps > 0$ and $\alpha$ be in $\F_q((T^{-1}))$ algebraic of degree at most $n$ 
such that $|\xi^p - \alpha| < H(\alpha)^{-w-1 + \eps}$. 
We look at $\xi^p$ as an element of the field
$\F_q((U^{-1}))$, where $U = T^p$. Note that $\alpha$ is in the algebraic closure of $\F_q((U^{-1}))$. 
Consequently, in the field $\F_q((U^{-1}))$, we have $w_n^@ (\xi^p) = w$. 
By Theorem \ref{Th:2.0}, we obtain that, in the field $\F_q((U^{-1}))$, we have $w_n^* (\xi^p) = w$, thus 
there are $\beta$ in $\F_q((U^{-1}))$ algebraic of degree at most $n$ of arbitrarily large 
height such that $|\xi^p - \beta| < H(\beta)^{-w-1 + \eps}$. But these $\beta$ are of the form $\beta = \gamma^p$,
with $\gamma$ in $\F_q((T^{-1}))$, so we get 
$$
|\xi - \gamma| < H(\gamma)^{-w-1 + \eps}, 
$$
and we deduce that $w_n^* (\xi) \ge w_n^* (\xi^p)$.
\end{proof}

\begin{proof}[Proof of Theorem \ref{WirsUnif}]
We obtain \eqref{eqWir}  by taking Wirsing's
argumentation \cite{Wir61}. 
Let $n \ge 2$ be an integer and let $\xi$ be
a power series which is either transcendental, 
or algebraic of degree $> n$.
Let $\eps > 0$ and set $w = w_n(\xi) (1 + \eps)^2$. 
Let $i_1, \ldots, i_n$ be distinct integers in $\{0, \ldots , n\}$ such that 
$\nu (\xi) \not= i_j$ for $j = 1, \ldots , n$. 
By Mahler's analog \cite{Mah41,Spr69} of Minkowski's theorem, 
there exist a
constant $c$ and, for any positive real number $H$,
a nonzero polynomial
$P(X)$ of degree at most $n$ such that
$$
|P(\xi)| \le H^{-w}, \quad |P(T^{i_1})|, \ldots, |P(T^{i_{n-1}})| \le H,  \quad
{\rm and} \quad |P(T^{i_n})| \le c H^{w-n+1}.  
$$
The definitions of  $w_n(\xi)$ and $w$ show
that $H(P) \gg H^{1 + \eps}$.
It follows from Lemma \ref{Gu2} 
that $P(X)$ has some root in a small neighbourhood of each of the points 
$\xi$, $T^{i_1}, \ldots, T^{i_{n-1}}$. Denoting by $\alpha$
the closest root to $\xi$, we get
$$
|\xi - \alpha| \gg \ll  \frac{|P(\xi)|}{H(P)} \ll H(P)^{-1} \,
(H^{w-n+1})^{-w/(w-n+1)} \quad
$$
and
$$
H(P) \ll H^{w-n+1}.
$$
Since all of this holds for any sufficiently large $H$, we deduce that 
$\hw_n^@ (\xi) \ge
w/(w-n+1)$.  Selecting now $\eps$ arbitrarily close to $0$, we obtain
the first assertion. 

Now, we establish the second assertion. 
Since $\hw_n (\xi) \ge n$,
there is nothing to prove if $w_n^*(\xi) \ge n$.
Otherwise, let $A >2$ be a real number with $w_n^*(\xi) < A - 1 < n$.
Thus, we have $|\xi - \alpha| \ge H(\alpha)^{-A}$ for all algebraic power
series $\alpha$ of degree  $\le n$ and sufficiently large height.
We make use of an idea of Bernik and Tishchenko; see also
\cite[Section 3.4]{BuLiv}. Let $\eps > 0$ be given. 
We may assume that $|\xi| \le 1$. 
Again, by Mahler's analog \cite{Mah41,Spr69} of Minkowski's theorem, there exist a
constant $c$ and, for any positive real number $H$,
a nonzero polynomial
$P(X) = a_n X^n + \ldots + a_1 X + a_0$ of degree at most $n$ such that
$$
|a_1|\le H^{1 + \eps}, \quad |a_2|, \ldots , |a_n| \le H, \quad 
|P(\xi)| \le c H^{-n-\eps}.
$$
If $P(X)$ is a product of irreducible inseparable factors, then $a_1 = 0$ and $H(P) \ll H$. 
Assume now that $P(X)$ has a separable factor. 
Let $\alpha$ in $C_\infty$ be the closest root of $P(X)$ to $\xi$. 
If $|a_1| > H$, then we deduce from 
$|n a_n \xi^{n-1} + \ldots + 2 a_2 \xi| \le H$ that 
$|P'(\xi)| = |a_1| \gg H(P)$. Thus, we get 
$$
|P(\xi)| \ge |\xi - \alpha| \cdot |P'(\xi)| \gg H(\alpha)^{1 - A}
$$
and
$$
|P(\xi)| \le H(P)^{-(n+ \eps)/(1 + \eps)}, 
$$
which also holds if $a_1 = 0$. 
Consequently, if $A- 1 \le (n+ \eps)/(1 + \eps)$, that is, if
$$
\eps < \frac{n+1 - A}{A - 2},
$$
then we get a contradiction if $H$ is large enough. 
We conclude that, for any $\eps < (n+1 - A)/(A-2)$ and any sufficiently large $H$, 
there exists a polynomial $P(X)$ of height  $\le H$
and degree $\le n$ satisfying
$|P(\xi)| \le H^{-n-\eps}$. 
Consequently, we have
$\hw_n (\xi) \ge n + \eps$, and thus 
$\hw_n (\xi) \ge n + (n+1 -  A)/(A-2)$.
We obtain the desired inequality by letting $A$ tend to $1 + w_n^*(\xi)$.  
\end{proof}

\section{Further problems}

Despite some effort, we did not succeed to solve the following problem.

\begin{problem} 
Let $n$ be a positive integer and $\xi$ in $\Q_p$. Prove that
$$
\hw_n^* (\xi) = \hw_n^@(\xi) = \hw_n^* (\xi^p). 
$$
\end{problem}

For $n \ge 2$, Ooto \cite{Oo17} proved the existence of $\xi$ in $\F_q ((T^{-1}))$ for which $w_n^* (\xi) < w_n (\xi)$. 
His strategy, inspired by \cite{Bu12}, 
was to use continued fractions to construct power series $\xi$ with $w_2^* (\xi) < w_2 (\xi)$ 
and $w_2^* (\xi)$ sufficiently large to ensure that, for small (in terms of $w_2^* (\xi)$) values of $n$, we have 
$$
w_2^* (\xi) = w_3^* (\xi) = \ldots = w_n^* (\xi), \quad
w_2 (\xi) = w_3 (\xi) = \ldots = w_n (\xi). 
$$
Very recently, Ayadi and Ooto \cite{AyOo20} answered a question of Ooto \cite[Problem 2.2]{Oo18} by
proving, for given $n \ge 2$ and $q \ge 4$, the existence of algebraic power series 
$\xi$ in $\F_q ((T^{-1}))$ for which $w_n^* (\xi) < w_n (\xi)$.

\begin{problem} 
Do there exist power series $\xi$ in $\F_q ((T^{-1}))$ such that
$$
w_n^* (\xi) < w_n (\xi), \quad \hbox{for infinitely many $n$?}
$$
\end{problem}

The formulation of the next problem is close to that of \cite[Problem 2.4]{Oo18}. 

\begin{problem} 
Let $\xi$ be an algebraic power series in $\F_q ((T^{-1}))$ and $n$ a positive integer. 
Is $w_1 (\xi)$ always rational? Are $w_n(\xi), w_n^* (\xi),$ and 
$\lambda_n (\xi)$ always algebraic numbers? 
\end{problem}

No results are known on uniform exponents of algebraic power series in $\F_q ((T^{-1}))$. 

\begin{problem} 
Let $\xi$ be an algebraic power series in $\F_q ((T^{-1}))$ and $n$ a positive integer. Do we have 
$$
\hw_n (\xi) = \hw_n^* (\xi) = n ?
$$
\end{problem}

In the real case, there are many of relations between the six exponents
$w_n$, $w_n^*$, $\lambda_n$, $\hw_n$, $\hw_n^*$, $\hla_n$, see e.g. 
the survey \cite{BuDurham}.
We believe that most of the proofs 
can be adapted to the power series setting.


\begin{thebibliography}{99}


\bibitem{Ab56}
S.  Abhyankar, 
{\it Two notes on formal power series}, 
Proc. Amer. Math. Soc. 7 (1956), 903--905.

\bibitem{AyOo20}
K. Ayadi and T. Ooto,
{\it On quadratic approximation for hyperquadratic continued fractions}, 
J. Number Theory (2020). 

\bibitem{BGR84} 
S. Bosch,  U.  G\"untzer,  and  R.  Remmert,  
Non-archimedean  analysis. 
Grundlehren der mathematischen Wissenschaften,  Vol. 261, Springer-Verlag, Berlin, Heidelberg,  
New  York,  Tokyo,  1984. 


\bibitem{Bu03}
Y. Bugeaud,
{\it Mahler's classification of numbers   
compared with Koksma's},
Acta Arith. 110 (2003), 89--105.



\bibitem{BuLiv}
Y. Bugeaud,
Approximation by algebraic numbers.
Cambridge Tracts in Mathematics 160,
Cambridge, 2004.

\bibitem{Bu12}
Y. Bugeaud,
{\it Continued fractions with low complexity: 
Transcendence measures and quadratic approximation},
Compos. Math. 148 (2012), 718--750.


\bibitem{BuDurham}
Y. Bugeaud,
{\it Exponents of Diophantine approximation}.
In: Dynamics and analytic number theory, 96--135, London Math. Soc. Lecture Note Ser., 437, 
Cambridge Univ. Press, Cambridge, 2016.


\bibitem{Bund78}
P. Bundschuh, 
{\it Transzendenzma\ss e in K\"orpern formaler Laurentreihen}, 
J. Reine Angew. Math. 299/300 (1978), 411?432.



\bibitem{Chen13}
H.-J. Chen,  
{\it Distribution of Diophantine approximation exponents for algebraic quantities in finite characteristic}, 
J. Number Theory 133 (2013), 3620--3644.  


\bibitem{Chen18}
H.-J. Chen,  
{\it Baker--Schmidt theorem for Hausdorff dimensions in finite characteristic}, 
Finite Fields  Appl. 52 (2018), 336--360. 


\bibitem{Fir13}
A. Firicel,
{\it Rational approximations to algebraic Laurent series with coefficients in a finite field}, 
Acta Arith. 157 (2013), 297--322.  

\bibitem{Gei79}
J. M. Geijsel,
Transcendence in fields of positive characteristic. 
Mathematical Centre Tracts, 91. Mathematisch Centrum, Amsterdam, 1979. 136 pp.

\bibitem{Gu96}
N. Guntermann, 
{\it Approximation durch algebraische Zahlen beschr\"ankten Grades im K\"orper 
der formalen Laurentreihen}, 
Monatsh. Math. 122 (1996), 345--354. 

\bibitem{Ked01}
K. Kedlaya,
{\it The algebraic closure of the power series field in positive characteristic}, 
Proc. Amer. Math. Soc. 129 (2001), 3461--3470. 


\bibitem{Ked17}
K. Kedlaya,
{\it On the algebraicity of generalized power series}, 
Beitr. Algebra Geom. 58 (2017), 499--527. 


\bibitem{Ko39}
J. F. Koksma,
{\it \"Uber die Mahlersche Klasseneinteilung der transzendenten Zahlen
und die Approximation komplexer Zahlen durch algebraische Zahlen},
Monats. Math. Phys. 48 (1939), 176--189.


  
\bibitem{Mah32}
K. Mahler,
{\it Zur Approximation der Exponentialfunktionen und des
Logarithmus. I, II},
J. reine angew. Math. 166 (1932), 118--150.


\bibitem{Mah41}
K. Mahler,
{\it An analogue to Minkowski's geometry of numbers in a field of series}, 
Ann. of Math. (2) 42 (1941), 488--522. 


\bibitem{Mah49}
K. Mahler,
{\it On a theorem of Liouville in fields of positive characteristic}, 
Canad. J. Math. 1 (1949), 397--400.


\bibitem{Oo17}
T. Ooto,
{\it Quadratic approximation in $\F_q ((T^{-1}))$}, 
Osaka J. Math. 54 (2017), 129--156. 


\bibitem{Oo18}
T. Ooto,
{\it On Diophantine exponents for Laurent series over a finite field},
 J. Number Theory 185 (2018), 349--378. 


\bibitem{Oo19}
T. Ooto,
{\it The existence of $T$-numbers in positive characteristic}, 
Acta Arith. 189 (2019), 179--189. 


\bibitem{Schm70}
{W. M. Schmidt}, 
{\it $T$-numbers do exist},
Symposia Math. IV, Inst. Naz. di Alta Math., Rome 1968,
pp. 3--26, Academic Press, 1970.


\bibitem{Schm71}
{W. M. Schmidt},
{\it Mahler's $T$-numbers},
1969 Number Theory Institute,
(Proc. of Symposia in Pure Math., Vol. XX, State Univ. New York,
Stony Brook, N. Y., 1969), pp. 275--286, Amer. Math. Soc., 
Providence, R. I., 1971.

\bibitem{Spr69}
V. G. Sprind\v zuk,
{Mahler's problem in metric number theory}.
{American Mathematical Society},
{Providence, R.I.},
{1969},


\bibitem{Tha04}
D. S. Thakur,
Function field arithmetic. 
World Scientific Publishing Co., Inc., River Edge, NJ, 2004.


\bibitem{Tha11}
D. S. Thakur,
{\it  Higher Diophantine approximation exponents and continued fraction symmetries for
function fields}, 
Proc. Amer. Math. Soc. 139 (2011), 11--19.


\bibitem{Tha12}
D. S. Thakur,
{\it From rationality to transcendence in finite characteristic.}
In: Transcendance et irrationalit\'e, 21--48,
SMF Journ. Annu., 2012, Soc. Math. France, Paris, 2012. 

\bibitem{Tha13}
D. S. Thakur,
{\it Higher Diophantine approximation exponents and continued fraction symmetries for
function fields II}, 
Proc. Am. Math. Soc. 141 (2013), 2603--2608.


\bibitem{Wir61}
E. Wirsing,
{\it Approximation mit algebraischen Zahlen beschr\"ankten Grades}, 
J. reine angew. Math. 206 (1961), 67--77.

\bibitem{Wir71}
E. Wirsing,
{\it On approximations of algebraic numbers by algebraic numbers 
of bounded degree}.
In: 1969 Number Theory Institute,
(Proc. of Symposia in Pure Math., Vol. XX, State Univ. New York,
Stony Brook, N. Y., 1969), pp. 213--247, Amer. Math. Soc., 
Providence, R. I., 1971.


\end{thebibliography}
\end{document}